\subjclass[2010]{Primary 58J52; Secondary 34S05, 34B24, 58J32}
\keywords{zeta-determinant, resolvent expansion, regularized sums}
\date{\today}
\begin{document}

\title[Regularizing infinite sums of zeta-determinants]
{Regularizing infinite sums of zeta-determinants}

\author{Matthias Lesch}
\address{Mathematisches Institut,
Universit\"at Bonn,
53115 Bonn,
Germany}

\email{ml@matthiaslesch.de, lesch@math.uni-bonn.de}
\urladdr{www.matthiaslesch.de, www.math.uni-bonn.de/people/lesch}

\author{Boris Vertman}
\address{Mathematisches Institut,
Universit\"at Bonn,
53115 Bonn,
Germany}
\email{vertman@math.uni-bonn.de}
\urladdr{www.math.uni-bonn.de/people/vertman}

\thanks{Both authors were supported by the 
        Hausdorff Center for Mathematics.}

\date{This document was compiled on: \today}

\begin{abstract}
We present a new multiparameter resolvent trace expansion for elliptic
operators, polyhomogeneous in both the resolvent and auxiliary variables.
For elliptic operators on closed manifolds the expansion is a simple
consequence of the parameter dependent pseudodifferential calculus. As
an additional nontrivial toy example we treat here Sturm-Liouville operators with
separated boundary conditions.

As an application we give a new formula, in terms of regularized sums, for the
$\zeta$--determinant of an infinite direct sum of Sturm-Liouville operators.
The Laplace-Beltrami operator on a surface of revolution decomposes into an
infinite direct sum of Sturm-Louville operators, parametrized by the
eigenvalues of the Laplacian on the cross-section $\mathbb{S}^1$.  We apply
the polyhomogeneous expansion to equate the zeta-determinant of the
Laplace-Beltrami operator as a regularized sum of zeta-determinants of the
Sturm-Liouville operators plus a locally computable term from the
polyhomogeneous resolvent trace asymptotics.  This approach provides a
completely new method for summing up zeta-functions of  operators
and computing the meromorphic extension of that infinite sum to $s=0$. 

We expect our method to extend to a much larger class of operators.
\end{abstract}

\maketitle
\tableofcontents

\section{Introduction and formulation of the result}
Various geometric problems involve zeta-determinants of Hodge-Laplace
operators which decompose into an infinite sum of scalar Laplace-type
operators. The most prominent example seems to be the discussion of analytic
torsion on spaces with conical singularities, where the problem of computing
the zeta-determinant of an infinite sum of scalar operators arises naturally
and has motivated the work of the first author in \cite{Les:DOR}. 

The basic approach to this problem is given by summing up zeta-functions 
$\zeta(s,\Delta_\lambda), \lambda \in \N_0,$ of the scalar Laplace-type operators 
$\Delta_\lambda$ for $\Re(s) \gg 0$ and computing the
meromorphic extension of that infinite sum to $s=0$. This approach was taken by Spreafico 
in \cite{Spr:ZFA, Spr:ZIF}, where the intricate task of constructing a meromorphic extension 
is addressed for bounded cones. Compare also the discussion by Bordag, Kirsten and 
Dowker in \cite{BKD:HKA} and by the second author in \cite{Ver:ATO}.

In this article we present a conceptually new method for computing the zeta determinant 
of an infinite sum of operators, which uses a new polyhomogeneous 
resolvent trace expansion. Our model setup here is a surface of 
revolution. The spectral decomposition on $\mathbb{S}^1$ decomposes the Laplace-Beltrami operator 
$\Delta$ on a surface of revolution into an infinite sum of 
Sturm-Liouville operators $\Delta_\lambda, \lambda \in \N_0,$ on a finite
interval with separated boundary conditions.

We establish an expansion of the resolvent trace for $\Delta_\lambda$, 
polyhomogeneous both in $\lambda$ and the resolvent parameter, and prove that 
the zeta-determinant of $\Delta$ is given by a regularized sum of zeta-determinants for 
$\Delta_\lambda, \lambda \in \N_0$. 
Moreover, the polyhomogeneous resolvent trace expansion explains 
the origin of the trace coefficients in the expansion of 
$\Tr(\Delta+z^2)^{-2}$ as $z\to \infty$, which do not appear in the 
corresponding (standard) resolvent expansions of the scalar operators $\Delta_\lambda$. 

\subsection{Laplace-Beltrami operator on a surface of revolution}
Let $(M=[0,1]\times \mathbb{S}^1, g=dx^2\oplus r(x)^2 g_{\mathbb{S}^1})$ be a
surface of revolution with $r\in C^{\infty}[0,1], r > 0$. The metric is a warped
product and the associated Laplace-Beltrami operator is given by the
differential expression 
\begin{equation}
\Delta = -\frac{\partial^2}{\partial x^2} - 
\frac{r'(x)}{r(x)} \frac{\partial}{\partial x} + \frac{1}{r(x)^2} \Delta_{\mathbb{S}^1},
\end{equation}
acting on $C^{\infty}_0((0,1) \times \mathbb{S}^1)$, the 
space of complex-valued smooth compactly supported functions on $(0,1)\times 
\mathbb{S}^1$. The natural $L^2$-space with respect to the metric $g$ 
is $L^2(M,g)=L^2([0,1]\times S^1, r(x)\, dx\, \textup{dvol}(g_{\mathbb{S}^1}))$.
Under the unitary map
\begin{equation}
\label{unitary}
\Phi: L^2(M,g) \to L^2([0,1], L^2(\mathbb{S}^1,g_{\mathbb{S}^1})), 
\ (\Phi u)(x):= u(x)\sqrt{r(x)},
\end{equation}
the Laplacian $\Delta$ transforms into the operator
\begin{equation}
\Phi \Delta \Phi^{-1} = -\frac{\partial^2}{\partial x^2} + \frac{1}{r(x)^2} \Delta_{\mathbb{S}^1} 
+ \left[\frac{r''(x)}{2r(x)} - \left(\frac{r'(x)}{2r(x)}\right)^2\right], 
\end{equation}
acting in $L^2([0,1],L^2(\mathbb{S}^1))$.
The functions $\left(\frac{1}{\sqrt{2\pi}}e^{i\lambda x}\right)_{\lambda \in \Z}$
form an orthonormal basis of $L^2(\mathbb{S}^1)$ of eigenfunctions 
of $\Delta_{\mathbb{S}^1}$ to the eigenvalues $\lambda^2, \lambda \in \Z$. 
The eigenvalues $\lambda^2\neq 0$ have multiplicity two, the eigenvalue $\lambda^2=0$ has multiplicity one. 
Hence we have a decomposition

\begin{equation}
\label{laplace-decomp}
\begin{split}
\Phi \Delta \Phi^{-1} &= -\frac{\partial^2}{\partial x^2} + \frac{1}{r(x)^2} \Delta_{\mathbb{S}^1} 
+ \left[\frac{r''(x)}{2r(x)} - \left(\frac{r'(x)}{2r(x)}\right)^2\right] \\
&=\bigoplus_{\lambda = -\infty}^{\infty} \left( -\frac{\partial^2}{\partial x^2} + \frac{\lambda^2}{r(x)^2} 
+ \left[\frac{r''(x)}{2r(x)} - \left(\frac{r'(x)}{2r(x)}\right)^2\right]\right)
=: \bigoplus_{\lambda=0}^{\infty} \Delta_\lambda,
\end{split}
\end{equation}
into a direct sum of one-dimensional Sturm-Liouville type operators.
We consider separated Dirichlet or generalized Neumann boundary conditions for 
$\Delta$. It is straightforward to check that under the unitary transformation 
$\Phi$ they correspond to separated Dirichlet or generalized Neumann 
boundary conditions for $\Phi \Delta \Phi^{-1}$ and that the 
resulting self-adjoint operator is compatible with the 
decomposition \Eqref{laplace-decomp}. By slight abuse of notation
we denote the transformed self-adjoint operator again by $\Delta$.
Accordingly, the resulting self-adjoint extensions of $\Delta_\lambda, \lambda\in \Z$,
are again denoted by $\Delta_\lambda$. So the operators are 
identified with their self-adjoint extensions which does not lead to 
notational confusion as the boundary conditions are fixed.

\subsection{Hadamard partie finie regularized sums and integrals} We briefly
recall some facts about regularized limits and the Hadamard partie finie
regularization of integrals, for more details cf. \cite[Sec.~2.1]{Les:OFT}.
Furthermore, we introduce a regularized sum based on the Hadamard partie finie
and the Euler MacLaurin summation formula. Of course this idea is not new, cf.
e.g. \cite{ChaCon:SAR}, \cite{GSW:EFS}.

We write $\R_+=[0,\infty)$ and $\R^*_+=(0,\infty)$. 
Let $f:\R^*_+\to \C$ be a function with a (partial) asymptotic expansion 
\begin{equation}
\label{reg-limit}
f(x) \sim \sum_{j=1}^{N-1} \sum_{k=0}^{M_j} a_{jk}x^{\A_j} \log^k(x) +
             \sum_{k=0}^{M_0} a_{0k} \log^k(x) +  f_N(x), \ \quad \ x\ge x_0>0,
\end{equation}
where $\ga_j\in\C$ are ordered with decreasing real part
and the remainder $f_N(x)=o(1)$ (Landau notation) as $x\to \infty$. 
We define its \emph{regularized limit} for $x\to \infty$ as 
\begin{equation}
\LIM_{x\to \infty}f(x) :=a_{00}.
\end{equation}
If $f$ has an expansion of the form \Eqref{reg-limit} as $x\to 0$ then
the regularized limit as $x\to 0$ is defined accordingly.

If $f$ is locally integrable and for $N\in \N$ sufficiently large, the remainder 
$f_N\in L^1[1,\infty)$, the integral $\int_1^R f(x)dx$ also admits an asymptotic 
expansion of the form \Eqref{reg-limit} and we can define the regularized integral 
as 
\begin{equation}\label{EqRegInt1}
\regint_1^\infty f(x)dx := \LIM_{R\to \infty}\int_1^R f(x) dx.
\end{equation}
Similarly, 
$\regint_0^1 f(x)dx := \LIM\limits_{\eps\to 0}\int_\eps^1 f(x) dx,$
if this regularized limit exists. Functions $f$ for which $\regint_0^\infty
f:= \regint_0^1 f+ \regint_1^\infty$ exists will be called
$\regint$ --integrable. As an example we mention the formulas
\begin{align}
 \regint_z^\infty x^{\ga+1} dx
    &= \begin{cases} -\frac{z^{\ga+2}}{\ga+2}, & \ga+2\not=0, z>0,\\
                     -\log z                 , & \ga=-2, z>0,\\
                      0,                        & z=0.
                     \end{cases}\label{EqExRegInt1}\\
 \regint_z^\infty x^{\ga+1}\log x  dx
    &= \begin{cases}  \frac{z^{\ga+2}}{(\ga+2)^2} - \frac{z^{\ga+2}\log z}{\ga+2}, & \ga+2\not=0, z>0,\\
                     -\frac 12 \log^2 z                 , & \ga=-2, z>0,\\
                      0,                        & z=0.
                     \end{cases}\label{EqExRegInt2}
 \end{align}                     

The regularized integral has a peculiar change of
variables rule.

\begin{lemma}[{\cite[Lemma 2.1.4]{Les:OFT}}]\label{LChangeVar} Let $f:\R_+\to\C$ be 
 $\regint$ --integrable and denote by $A_\infty$ (resp. $A_0$) the coefficients
 of $x^{-1}$ in the expansion \Eqref{reg-limit} as $x\to \infty$ (resp. $x\to
 0$) and assume for simplicity that there are no terms of the form $x\ii
 \log^k x,$ with $k\ge 1$, in these expansions. Then  for $\gl >0$
 \begin{equation}\label{EqChangeVar}
  \regint_0^\infty f(x) dx = \gl \regint_0^\infty f(\gl \cdot x) dx -
  A_\infty \log \gl + A_0 \log\gl.
 \end{equation}
\end{lemma}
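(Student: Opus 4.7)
The plan is to split the regularized integral at $x=1$, apply the ordinary change of variables $y = \gl x$ in each piece, and track carefully how this substitution interacts with the $\LIM$ operation defining $\regint$. The key observation is that although the identity $\gl \int_\eps^R f(\gl x)\,dx = \int_{\gl\eps}^{\gl R} f(y)\,dy$ is exact for honest Riemann integrals, the operation $\LIM$ does \emph{not} commute with the rescalings $R \mapsto \gl R$ and $\eps \mapsto \gl \eps$ in the presence of logarithmic terms, and it is precisely this non-commutation that produces the correction $(A_0 - A_\infty)\log \gl$.

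More precisely, I would first record the following elementary fact: if a function $g$ admits an expansion of the form \eqref{reg-limit} as $T \to \infty$ with $\log^k T$-coefficients $b_{0k}$, then, using $(\gl R)^{\ga_j} = \gl^{\ga_j} R^{\ga_j}$ and $\log(\gl R) = \log R + \log \gl$, one reads off
\[
\LIM_{R \to \infty} g(\gl R) \;=\; \LIM_{T \to \infty} g(T) + \sum_{k \geq 1} b_{0k}(\log \gl)^k.
\]
Applying this to $g(T) := \int_1^T f(y)\,dy$, termwise integration of the expansion of $f$ at $\infty$ shows that a $\log^k T$ term in $g$ is produced only by a $y^{-1}\log^{k-1} y$ term in $f$. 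Hence the hypothesis of the lemma forces $b_{0k} = 0$ for $k \geq 2$, while $b_{01} = A_\infty$. Combining this with $\gl \int_1^R f(\gl x)\,dx = \int_1^{\gl R} f(y)\,dy - \int_1^{\gl} f(y)\,dy$ and passing to $\LIM_{R\to \infty}$ yields
\[
\gl \regint_1^\infty f(\gl x)\,dx \;=\; \regint_1^\infty f(y)\,dy \;+\; A_\infty \log \gl \;-\; \int_1^{\gl} f(y)\,dy.
\]

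A completely symmetric argument near $0$, using $\LIM_{\eps \to 0}$ and the substitution $\eps' := \gl \eps$ applied to the expansion of $\int_{\eps'}^1 f(y)\,dy$ (where now $-A_0 \log \eps'$ is the unique logarithmic term, again by the hypothesis on $f$), gives
\[
\gl \regint_0^1 f(\gl x)\,dx \;=\; \regint_0^1 f(y)\,dy \;-\; A_0 \log \gl \;+\; \int_1^{\gl} f(y)\,dy.
\]
Adding the two displayed formulas, the finite pieces $\int_1^\gl f(y)\,dy$ cancel and rearrangement produces \eqref{EqChangeVar}. I do not anticipate a serious obstacle; the argument is essentially careful bookkeeping. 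The only delicate point is to verify that the hypothesis of no $x^{-1}\log^k x$ terms with $k \geq 1$ in the expansions of $f$ at either endpoint is exactly what is needed to ensure that the antiderivative carries at most a single $\log$-term at each endpoint, so that rescaling the cutoff by $\gl$ produces exactly the two stated corrections $A_\infty \log \gl$ and $-A_0 \log \gl$.
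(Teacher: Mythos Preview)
Your argument is correct. The key step---that rescaling the cutoff in $\LIM$ picks up exactly the $\log^k$-coefficients of the antiderivative, and that the hypothesis on $f$ forces the antiderivative to have at most a single $\log$ at each endpoint---is the right mechanism, and your bookkeeping with the compensating finite piece $\int_1^\gl f$ is clean.

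As for comparison: the paper does not actually prove this lemma; it is imported verbatim from \cite[Lemma 2.1.4]{Les:OFT} and used as a black box. So there is nothing in the present paper to compare your argument against. Your proof is a standard direct verification and is essentially the one found in the cited reference.
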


We also need a notion of a partie finie regularized sum. 

\begin{prop} \label{PRegSum}
Let $f:[1,\infty)\to \C$ be a function with an asymptotic expansion
\Eqref{reg-limit} as $x\to\infty$ where $f_N(x)=O(x^{-1-\delta}), x\to\infty$
for some $\delta>0$. Then 
\begin{equation} \label{EqRegSum1}
\regsum_{\lambda=1}^\infty f(\lambda) := \LIM_{N\to \infty}
\sum_{\lambda=1}^{N} f(\lambda)
\end{equation}
exists. Moreover, if $f$ is smooth and if the asymptotic expansion 
may be differentiated $(2M+1)$ times, $2M>\Re(\A_1)$, then
\begin{align}
 \regsum_{\lambda=1}^\infty f(\lambda)  =& \regint_1^\infty f(x)dx + \sum_{k=1}^M \frac{B_{2k}}{(2k)!} 
    \Bigl( \LIM_{N\to \infty} f^{(2k-1)}(N) -
    f^{(2k-1)}(1)\Bigr) \label{EqRegsum2}\\ 
     &+\frac{1}{2}f(1) +\frac 12
              \LIM_{N\to\infty} f(N) + \frac{1}{(2M+1)!} \int_1^\infty 
              B_{2M+1}(x-[x])f^{(2M+1)}(x)dx . \nonumber 
\end{align}
Here $B_j$ denotes the $j$-th Bernoulli number, $B_j(x)$ the $j$-th Bernoulli 
polynomial, and $f^{(j)}$ denotes the $j$-th derivative of $f$. 
\end{prop}
\begin{proof} Let us first assume that $f$ is smooth with an asymptotic
 expansion which may be differentiated. Then the Euler MacLaurin summation formula 
yields for $N\in \N$
\begin{equation}
\label{EM}
\begin{split}
\sum_{\lambda=1}^{N} f(\lambda) 
    =& \int_1^N f(x)dx +\sum_{k=1}^M \frac{B_{2k}}{(2k)!} 
             \Bigl(f^{(2k-1)}(N)-f^{(2k-1)}(1)\Bigr) \\ 
     &\quad + \frac{1}{(2M+1)!} \int_1^N 
              B_{2M+1}(x-[x])f^{(2M+1)}(x)dx  +\frac{1}{2}(f(1) + f(N)).
\end{split}
\end{equation}
This shows immediately that  $\sum_{\gl=1}^N f(\gl)$ admits an asymptotic 
expansion of the form \Eqref{reg-limit} and that the regularized limit
is given by the right hand side of \Eqref{EM}.

For the general $f$ as in the proposition we only have to note that 
$\sum_{\gl=1}^\infty f_N(\gl)$ converges absolutely and that the first
part of the proof applies to the individual summands
$x^{\ga_j} \log^k x$ of the expansion \Eqref{reg-limit}.
\end{proof}

\subsection{Statement of the main results}
Our first main result establishes a Fubini-type theorem for regularized integrals and
is one fundamental ingredient in the derivation of our main Theorem
\plref{main-thm} below.
\begin{theorem}[Fubini Theorem for regularized sums and integrals] \label{fubini}
Assume $f\in C^{\infty}(\R_+^2)$ is of the form 
\begin{align}
f(x,y) = \sum_{j=0}^{N-1}f_{\A_j}(x,y) + F_N(x,y),
\end{align}
where each $f_{\A_j}\in C^\infty(\R^2_+\setminus \{(0,0)\})$ is homogeneous of order $\A_j\in \C$
and the remainder $F_N= O((x^2+y^2)^{-1/2-\delta}), x^2+y^2\ge r_0>0$ for some $\delta>0$. 
Then, for $a,b\ge 0, a+b>0$
\begin{align}\label{fubini-eq}
\regint_a^\infty\regint_b^\infty f(x,y) \, dy \, dx = 
\regint_b^\infty \regint_a^\infty f(x,y) \, dx \, dy - 
\int_0^\infty f_{-2}(x,1)\log x\,  dx.
\end{align}  
and for $a\ge 0, \gl_0\ge 1$
\begin{equation}\label{EqFubiniSum}
\begin{split}
 &\regint_a^\infty\regsum_{\gl=\gl_0}^\infty f(x,\gl) \, dx = 
 \regsum_{\gl=\gl_0}^\infty \regint_a^\infty f(x,\gl) \, dx -
\int_0^\infty f_{-2}(x,1)\log x\, dx \\
&-\frac{1}{2} \regint_0^\infty f_{-1} (x,1) dx - 
\sum_{k=1}^M \frac{B_{2k}}{(2k)!} \regint_0^\infty \partial_2^{2k-1} f_{2k-2} (x,1) dx
\end{split}
\end{equation}  
\end{theorem}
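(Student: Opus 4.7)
The plan is to reduce the assertion, by linearity, to the case of a single homogeneous piece $f_\alpha$ plus a rapidly decaying remainder, dispose of the remainder by classical Fubini and dominated convergence, and compute each homogeneous piece explicitly via the substitutions $y=xu$ or $x=yv$ combined with the change-of-variable formula from Lemma \ref{LChangeVar} and the explicit formulas \eqref{EqExRegInt1}--\eqref{EqExRegInt2}. The sum-integral identity \eqref{EqFubiniSum} is then derived by combining \eqref{fubini-eq} with the Euler--Maclaurin expansion from Proposition \ref{PRegSum}.

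First I would choose $N$ so large that $F_N\in L^1([a,\infty)\times [b,\infty))$ and that $F_N(x,\lambda)$ is absolutely summable in $\lambda$ uniformly for $x\ge a$; classical Fubini and dominated convergence then yield both \eqref{fubini-eq} and \eqref{EqFubiniSum} for $F_N$ with no correction. It remains to prove the identities for each homogeneous piece $f_\alpha$ individually. For \eqref{fubini-eq}, the substitution $y=xu$, applied before the regularized limit in the upper bound, gives
\begin{equation*}
\regint_b^\infty f_\alpha(x,y)\,dy = x^{\alpha+1} \regint_{b/x}^\infty f_\alpha(1,u)\,du,
\end{equation*}
and an analogous formula with $x\leftrightarrow y$ for the other ordering. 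The outer regularized integral is then evaluated using \eqref{EqExRegInt1}--\eqref{EqExRegInt2} together with Lemma \ref{LChangeVar}. For $\alpha\ne -2$, the power behavior $x^{\alpha+1}$ produces contributions that are symmetric between the two orderings. For $\alpha = -2$, the outer integrand contains $x^{-1}$, which triggers the logarithmic case of \eqref{EqExRegInt1}, and tracking the change-of-variable correction in Lemma \ref{LChangeVar} together with the homogeneity identity $f_{-2}(1,u) = u^{-2} f_{-2}(1/u,1)$ produces the discrepancy $-\int_0^\infty f_{-2}(x,1)\log x\,dx$.

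For \eqref{EqFubiniSum}, apply the Euler--Maclaurin formula \eqref{EqRegsum2} to both the inner $\regsum$ on the left and the outer $\regsum$ on the right. The integral-integral difference contributes $-\int_0^\infty f_{-2}(x,1)\log x\,dx$ by \eqref{fubini-eq}; the boundary and derivative values at $y=\lambda_0$ and the Bernoulli-polynomial remainder cancel on both sides, provided $M$ is large enough that the remainder is absolutely integrable. The surviving terms are of the form
\begin{equation*}
\regint_a^\infty \LIM_{N\to\infty} g(x,N)\,dx - \LIM_{N\to\infty} \regint_a^\infty g(x,N)\,dx,
\end{equation*}
with $g=f$ (for the $\tfrac12$ boundary term) or $g=\partial_y^{2k-1}f$ (for the $k$-th derivative term). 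Expanding $f_\alpha(x,N) = \sum_n \tfrac{\partial_1^n f_\alpha(0,1)}{n!}\,N^{\alpha-n}x^n$ and $\regint_a^\infty f_\alpha(x,N)\,dx = N^{\alpha+1}\regint_0^\infty f_\alpha(t,1)\,dt - \sum_n \tfrac{\partial_1^n f_\alpha(0,1)}{(n+1)!}\,a^{n+1} N^{\alpha-n}$, the $N^0$ coefficients on both sides share identical polynomial-in-$a$ terms, which cancel exactly. Only the ``resonant'' homogeneity survives: $\alpha=-1$ for the $\tfrac12$ term, producing $-\regint_0^\infty f_{-1}(x,1)\,dx$, and $\alpha=2k-2$ (making $\partial_y^{2k-1}f_\alpha$ homogeneous of degree $-1$) for the $k$-th derivative term, producing $-\regint_0^\infty \partial_2^{2k-1} f_{2k-2}(x,1)\,dx$.

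The main obstacle will be the bookkeeping in this last step: verifying that the polynomial-in-$a$ contributions indeed cancel for every non-resonant homogeneity and that the sign and numerical factors match, so that the right-hand side of \eqref{EqFubiniSum} is independent of the cutoff $a$. The computation for $\alpha=-2$ in part (a) requires careful use of Lemma \ref{LChangeVar} with both $A_0$ and $A_\infty$ correction terms, while the cancellation for \eqref{EqFubiniSum} rests on matching the Taylor series of $f_\alpha(t,1)$ at $t=0$ termwise against the explicit polynomial formulas \eqref{EqExRegInt1}--\eqref{EqExRegInt2}.
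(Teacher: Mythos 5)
Your proposal is correct and follows essentially the same route as the paper: reduce to a single homogeneous piece, compute the double regularized integral explicitly (this is the paper's Lemma~\ref{LHomDoubleInt}, with the $\ga=-2$ resonance producing the $-\int_0^\infty f_{-2}(x,1)\log x\,dx$ term), then derive \eqref{EqFubiniSum} via Euler--Maclaurin, where the residual correction terms arise exactly from exchanging $\regint_a^\infty$ with a regularized limit in $\gl$ (the paper packages this as Lemma~\ref{LLimitExchange}, while you derive the same cancellation/resonance dichotomy inline by Taylor-expanding $f_\ga(x,N)$ in $x/N$). The only organizational difference is that you apply Euler--Maclaurin to both sides and subtract rather than expanding only the left side and pushing $\regint_a^\infty$ through term by term, which is an equivalent bookkeeping.
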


Note that the integration in the
correction term on the right is from $0$ to $\infty$ independently
of the values of $a,b, \gl_0$. The integral 
$\int_0^\infty f_{-2}(x,1) \log x \, dx$ exists in the ordinary sense
since $f_{-2}(x,1)$ is smooth up to $x=0$ and $O(x^{-2})$ as $x\to\infty$.

Our second main result addresses the polyhomogeneous asymptotic expansion of the 
resolvent trace for $(\Delta_\lambda+z^2)^{-1}$ \emph{jointly} in $(\lambda,z)\in \R_+^2$. 

\begin{prop}\label{phg-trace}
Let $\lambda\in \R$ and $V,W \in C^\infty(\R)$ with $V(x)>0$ for all $x\in\R$.
Consider the differential operator
\begin{align}\label{EqDeltaLa}
\Delta_{\lambda,0}=-\frac{\partial^2}{\partial x^2} + \lambda^2 V + W
: C^{\infty}_0(0,1) \to C^{\infty}_0(0,1).
\end{align} 
Let $\Delta_\lambda$ be the self-adjoint extension of $\Delta_{\lambda,0}$, obtained
by imposing \emph{separated} Dirichlet or generalized Neumann boundary conditions. 
Then the resolvent $(\Delta_\lambda+z^2)^{-1}$ 
is trace class for $|(\gl,z)|\geq z_0$ large, 
and its trace admits the following polyhomogeneous expansion 
\begin{align}\label{phg-trace-eq}
\partial_\lambda^{\A}\partial_z^{\beta}\Tr(\Delta_\lambda+z^2)^{-1} \sim \sum_{i=0}^{\infty} 
h_i (\lambda,z), \quad
|(\lambda,z)| \rightarrow \infty,
\end{align}
where each $h_i\in C^{\infty}(\R^2_+\setminus \{(0,0)\})$ is homogeneous of order $(-\gamma_i)$, 
$\gamma_i:=i+1+\A+\beta$. Note that $h_i$ depends on $\A,\beta$. Moreover, the leading term 
$h_0$ comes from the interior expansion only.
\end{prop}

In particular
\begin{align}
\label{trace-2-expansion}
\Tr(\Delta_\lambda+z^2)^{-2} = - (2z)^{-1} \partial_z
\Tr(\Delta_\lambda+z^2)^{-1} \sim \sum_{i=0}^\infty h_i (\lambda,z), \ |(\lambda,z)| \to \infty,
\end{align}
where each $h_i\in C^{\infty}(\R^2_+\setminus \{(0,0)\})$
is homogeneous of order $(-\gamma_i)$, $\gamma_i:=i+3$,  jointly in both variables. 

\begin{remark}\label{boutet-de-monvel}
There are several fundamental approaches to the analysis on manifolds
with boundaries or singularities, among them those which can be traced
back to Kondratiev's work on conical singularities (cf. e.g.
Egorov-Schulze \cite{EgSh:PDO}), those based on Boutet de Monvel's
formalism (e.g. Grubb \cite{Gru:FCO}) as well as those going back to
the $b$-calculus of Melrose (Melrose \cite{Mel:APS}).
Consequently there exist various principal ways to establishing
Proposition \ref{phg-trace}.  On the one hand, the operator
$(\Delta_\lambda + z^2)$ is a parameter-elliptic element in the Boutet
de Monvel calculus for boundary value problems with parameter
$(\lambda, z)$. Hence its resolvent admits a polyhomogeneous
asymptotic expansion in the two parameters, resulting in the statement
of \eqref{phg-trace-eq}.  In this context we should also mention the
central contributions by Seeley \cite{See:TRO}. 

On the other hand, the Schwartz kernel of the resolvent $(\Delta_\lambda + z^2)^{-1}$ is a sum of 
an interior parametrix plus a polyhomogeneous function on some $b$-blowup space up to some remainder 
of higher order. This allows for a derivation of \eqref{phg-trace-eq} using elements of Melrose's $b$-calculus.
While we are not attempting to compare both ansatzes, we decided to present the second approach here, 
which lays out a framework for future analysis of related questions in case of singular operators.  
\end{remark}

Proposition \ref{phg-trace} implies in particular the well-known fact, that for fixed 
$\lambda$ there is an asymptotic expansion as $z\to \infty$
\begin{equation}\label{1.14a}
\Tr(\Delta_\lambda+z^2)^{-1} \sim \sum_{k=0}^\infty b_k z^{-k-1}, 
\end{equation}
which may be differentiated in $z$, e.g.,
\begin{equation}
 \begin{split}
\label{1.14b}
\Tr\bl \Delta_\lambda+z^2\br^{-2} = -\frac{1}{2z} 
                \frac{d}{dz} \Tr\bl\Delta_\gl + z^2\br^{-1}
                \sim \sum_{k=0}^\infty \frac{k+1}{2} b_k z^{-k-3}
                =: \sum_{k=0}^\infty c_k z^{-k-3}.
               \end{split}
              \end{equation}
The leading orders in the resolvent trace asymptotics for $\Delta:=\oplus \Delta_\lambda, \lambda \in \Z$, 
and $\Delta_\lambda$ are fundamentally different. On the one hand $\Tr(\Delta_\lambda+z^2)^{-2}=O(z^{-3})$ 
whereas $\Tr(\Delta+z^2)^{-2}=O(z^{-2})$ as $z\to \infty$. Indeed, the resolvent trace asymptotics of $\Delta_\lambda$ 
does not sum up to the asymptotics of the full resolvent trace for $\Delta$
in an obvious way. Nevertheless we have the

\begin{theorem}\label{trace-sum}
In the notation of Proposition \textup{\ref{phg-trace}} we have
for the operator 
$ \Delta:=\bigoplus\limits_{\gl=-\infty}^\infty \Delta_\gl$
the resolvent trace expansion
\begin{align}\label{1.15}
\Tr(\Delta+z^2)^{-2} = \sum_{\lambda=-\infty}^{\infty}\Tr(\Delta_\lambda+z^2)^{-2} 
\sim \sum_{k=2}^\infty a_k z^{-k}, \ z \to \infty.
\end{align}
\end{theorem}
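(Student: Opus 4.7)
The identity $\Tr(\Delta+z^2)^{-2}=\sum_{\lambda\in\Z}\Tr(\Delta_\lambda+z^2)^{-2}$ is immediate from the orthogonal direct sum $\Delta=\bigoplus_\lambda \Delta_\lambda$. The substance of the theorem is the asymptotic expansion, and my plan is to derive it by summing the polyhomogeneous expansion of Proposition \ref{phg-trace} over $\lambda$ via Euler--Maclaurin.

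Using $\Delta_\lambda=\Delta_{-\lambda}$, I reduce to $\Tr(\Delta_0+z^2)^{-2}+2\sum_{\lambda=1}^\infty\Tr(\Delta_\lambda+z^2)^{-2}$ and, for any $N$, split each summand as
$$\Tr(\Delta_\lambda+z^2)^{-2}=\sum_{i=0}^{N-1}h_i(\lambda,z)+R_N(\lambda,z),$$
with $h_i$ homogeneous of degree $-(i+3)$ on $\R^2_+\setminus\{(0,0)\}$ and $R_N=O\bigl((\lambda^2+z^2)^{-(N+3)/2}\bigr)$ uniformly in $\lambda$. For each fixed $i$, the substitution $u=x/z$ gives
$$\int_0^\infty h_i(x,z)\,dx=z^{-(i+2)}\int_0^\infty h_i(u,1)\,du,$$
which contributes the leading power $z^{-(i+2)}$. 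Applying Euler--Maclaurin (Proposition \ref{PRegSum}) to $f(\lambda)=h_i(\lambda,z)$, whose decay $\lambda^{-(i+3)}$ at infinity has integrable derivatives, and using the homogeneity scaling
$$\partial_\lambda^{2k-1}h_i(1,z)=z^{-(i+2k+2)}(\partial_\lambda^{2k-1}h_i)(1/z,1)=O\bigl(z^{-(i+2k+2)}\bigr),\qquad h_i(0,z)=O\bigl(z^{-(i+3)}\bigr),$$
I read off that every Euler--Maclaurin correction is an integer negative power of $z$ strictly smaller than $-(i+2)$. Summing over $i$, and estimating $\sum_\lambda R_N(\lambda,z)=O(z^{-(N+2)})$ by comparing the sum to $\int(\lambda^2+z^2)^{-(N+3)/2}d\lambda$, produces the claimed expansion $\sum_{k=2}^\infty a_k z^{-k}$.

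Two points deserve care. First, the joint polyhomogeneity in $(\lambda,z)$ of Proposition \ref{phg-trace}, together with the differentiability of the expansion in $\partial_\lambda$, is essential: Euler--Maclaurin requires uniform bounds both on derivatives of the $h_i$ and on the remainder $R_N$, and this is precisely where the joint polyhomogeneous framework pays off. The absence of logarithmic terms in the final expansion is automatic because every homogeneity degree $-(i+3)$ is a distinct nonzero integer, so the integrals and endpoint derivatives produced by Euler--Maclaurin stay at integer negative powers of $z$. Second, an alternative route is to observe that $\Delta$ is a parameter-elliptic boundary value problem on the compact $2$-manifold $M$; by the Boutet de Monvel calculus, or by Mellin-transforming the heat kernel expansion (cf.\ Remark \ref{boutet-de-monvel}), $\Tr(\Delta+z^2)^{-2}$ acquires an expansion in integer powers $z^{-k}$, $k\geq 2$, purely on dimensional grounds. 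The Euler--Maclaurin derivation above is conceptually closer to the paper's polyhomogeneous viewpoint and is what I would use to make contact with the regularized-sum formulas developed in the sequel, with the main technical obstacle being the bookkeeping of the remainder and of the endpoint contributions as one passes from $(\regsum_\lambda,\regint_x)$ to ordinary sum-integrals.
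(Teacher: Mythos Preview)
Your approach is essentially the paper's: Euler--Maclaurin combined with the polyhomogeneous expansion of Proposition~\ref{phg-trace}. The only organizational difference is that the paper applies Euler--Maclaurin directly to $f(\lambda)=\Tr(\Delta_\lambda+z^2)^{-2}$ and then expands each resulting term, whereas you first decompose into homogeneous pieces $h_i$ and apply Euler--Maclaurin term by term; the two orderings are interchangeable here.

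Two small points. First, Euler--Maclaurin from $\lambda=1$ yields $\int_1^\infty h_i(\lambda,z)\,d\lambda=z^{-(i+2)}\int_{1/z}^\infty h_i(u,1)\,du$, not the integral from $0$; the difference $\int_0^{1/z}h_i(u,1)\,du$ Taylor-expands into lower integer powers of $z$, so your leading-order claim is right but the lower limit should be tracked. Second, your alternative route via the Boutet de Monvel calculus on a compact surface is not available in the generality of the statement: as the paper notes after Theorem~\ref{trace-sum}, $\Delta=\bigoplus_\lambda\Delta_\lambda$ is an abstract direct sum of Sturm--Liouville operators with arbitrary $V,W$ and need not arise from a two-dimensional boundary value problem. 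The Euler--Maclaurin argument is therefore the honest one here.
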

Note that $\Delta$ is just an abstract sum of operators of the form
\Eqref{EqDeltaLa} and therefore does not necessarily have an interpretation
as a realization of an elliptic boundary value problem on a surface.
If, like in the case of a surface of revolution, $\Delta$ is a realization of a 
local elliptic boundary value problem, then Theorem \ref{trace-sum} is well-known, 
e.g. \cite[Sec. 1.11]{Gil:ITH}.
More important than the result itself, however, is our method of proof using
the polyhomogeneous resolvent trace expansion in Proposition \ref{phg-trace}
and \Eqref{EM}, which explains precisely the difference in the leading
orders of the resolvent trace expansion of
$\Tr(\Delta_\lambda+z^2)^{-2}=O(z^{-3}), z\to\infty,$ 
and their sum $\Tr(\Delta+z^2)^{-2}=O(z^{-2}), z\to\infty$.

We now define the associated zeta-regularized determinants, following \cite[(1.7)]{Les:DOR}.
The zeta-function of $\Delta_\lambda$ is defined for $\Re(s)\gg 0$ by 
\begin{align}\label{zeta}
\zeta (s, \Delta_\lambda) = \sum_{\mu \in \textup{Spec}\Delta_\lambda
\setminus \{0\}}
\textup{m($\mu$)} \mu^{-s}, 
\end{align}
where $\textup{m($\mu$)}$ denotes the multiplicity of the eigenvalue $\mu >0$. Using
the identity 
\begin{align}\label{1.17}
\zeta (s, \Delta_\lambda) = 2 \, \frac{\sin \pi s}{\pi} 
\regint_0^\infty z^{1-2s} \Tr(\Delta_\lambda+z^2)^{-1} dz,
\end{align}
the asymptotics \Eqref{1.14a} implies that $\zeta (s, \Delta_\lambda)$ extends meromorphically 
to $\C$ with $s=0$ being a regular point. From \Eqref{1.14a} and \Eqref{1.17}
one derives the 
formula for $\log \det_{\zeta} \Delta_\lambda = -\zeta' (0, \Delta_\lambda)$
\begin{equation}
\log \det\nolimits_{\zeta} \Delta_\lambda = -2 \regint_0^\infty z \Tr(\Delta_\lambda + z^2)^{-1} dz.
\end{equation}
The resolvent $(\Delta+z^2)^{-1}$ is not trace class and we cannot employ exactly the same
formulas for the definition of $\det_{\zeta} \Delta$. However, integration by parts in \Eqref{1.17} yields
\begin{align}
\label{1.18}
\zeta (s, \Delta_\lambda) = 2 \, \frac{\sin \pi s}{\pi (1-s)} 
\regint_0^\infty z^{3-2s} \Tr(\Delta_\lambda+z^2)^{-2} dz,
\end{align}
and thus with \Eqref{1.14b}
\begin{equation}\label{EqLogDetzLa}
 \log \det\nolimits_{\zeta} \Delta_\lambda = 
-2 \regint_0^\infty z^3 \Tr(\Delta_\lambda + z^2)^{-2} dz.
\end{equation}
Invoking Theorem \ref{trace-sum} one sees that \Eqref{1.18} is still valid 
for $\Delta$ instead of $\Delta_\lambda$. Moreover, the asymptotic expansion \Eqref{1.15}
implies that \Eqref{EqLogDetzLa} also holds for $\Delta$.

Note that unlike in the standard convention, here we do not set the zeta-determinant to zero 
for operators that are not invertible. Our third and final main result now reads as follows.

\begin{theorem} \label{main-thm} In the notation of Proposition \plref{phg-trace} and Theorem
 \plref{trace-sum} we have for the zeta-regularized sum of $\Delta$
\begin{equation}
\begin{split}
&\log \det\nolimits_{\zeta} \Delta = \regsumwide_{\lambda=-\infty}^\infty \log \det\nolimits_{\zeta} \Delta_\lambda 
- 4 \int_0^\infty z^3 h_{2}(1,z) \log (z) dz \\
& + 2 \regint_0^\infty z^3 h_{1}(1,z) dz + 2 B_2 \regint_0^\infty z^3 \partial_{\lambda} h_{0}(1,z) dz. 
\end{split}
\end{equation}
where $h_{j}, j=0,1,2$ denotes the homogeneous term of degree $(-3-j)$ in the 
polyhomogeneous asymptotic expansion of $\Tr(\Delta_\lambda + z^2)^{-2}$
as $|(\lambda, z)|\to \infty$, respectively. 
\end{theorem}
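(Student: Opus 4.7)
The strategy is to apply the Fubini theorem \ref{fubini} to interchange the regularized integral in $z$ with the regularized sum in $\lambda$, starting from
\[
\log\det\nolimits_\zeta \Delta = -2\regint_0^\infty z^3 \Tr(\Delta+z^2)^{-2}\,dz,
\]
the $\Delta$-version of \eqref{EqLogDetzLa}, which is valid thanks to Theorem \ref{trace-sum}. Using the decomposition $\Tr(\Delta+z^2)^{-2}=\sum_{\lambda\in\mathbb Z}\Tr(\Delta_\lambda+z^2)^{-2}$ and setting $f(z,\lambda):=z^3 \Tr(\Delta_\lambda+z^2)^{-2}$, the task reduces to evaluating $\regint_0^\infty \sum_\lambda f(z,\lambda)\,dz$ by swapping the order of operations.

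The essential input is Proposition \ref{phg-trace} in its form \eqref{trace-2-expansion}: the function $f$ admits a polyhomogeneous expansion $f(z,\lambda)\sim \sum_{i\ge 0} z^3 h_i(\lambda,z)$ in which each summand $z^3 h_i(\lambda,z)$ is homogeneous of order $-i$ in $(z,\lambda)$. This places $f$ exactly in the setting of Theorem \ref{fubini}, once $N$ is chosen large enough so that the remainder has decay $O(|(z,\lambda)|^{-1-\delta})$. Since every homogeneity order in the expansion is non-positive, the Bernoulli sum in \eqref{EqFubiniSum} truncates to the single term $k=1$: the components $f_{2k-2}$ for $k\ge 2$ would be purely positive-order homogeneous and hence vanish identically in our expansion. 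The Fubini identity \eqref{EqFubiniSum} with integration variable $z$ and summation variable $\lambda\ge 1$ thus produces exactly three correction integrals, built from $f_{-2}(z,1)=z^3 h_2(1,z)$, $f_{-1}(z,1)=z^3 h_1(1,z)$ and $\partial_\lambda f_0(z,1)=z^3 \partial_\lambda h_0(1,z)$, matching the three correction terms in the target formula.

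Using \eqref{EqLogDetzLa} to identify $\regint_0^\infty f(z,\lambda)\,dz = -\tfrac12\log\det_\zeta\Delta_\lambda$, handling the $\lambda=0$ contribution separately and folding in the factor of two coming from $\Delta_\lambda = \Delta_{-\lambda}$ (so that the two-sided sum splits as the $\lambda=0$ term plus twice $\sum_{\lambda\ge 1}$), and multiplying through by $-2$, one arrives at the asserted formula with the prefactors $-4$, $+2$, $+2B_2$ attached to the three correction terms.

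The main obstacle is verifying the hypotheses of Theorem \ref{fubini}, in particular the uniform remainder estimate after multiplication by $z^3$. This reduces to Proposition \ref{phg-trace} upon choosing $N$ sufficiently large, but ultimately rests on the joint polyhomogeneity of the resolvent trace, which is the principal technical achievement of the paper. A minor subsidiary point is that the regularized sum $\regsum_{\lambda\ge 1}$ prescribed by Fubini coincides with the ordinary absolutely convergent sum defining the trace of the full operator; this is automatic from the $\lambda^{-3}$ pointwise decay provided by the leading homogeneous component $h_0(\lambda,z)$.
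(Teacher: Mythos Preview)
Your proposal is correct and follows essentially the same route as the paper: start from the resolvent-trace formula for $\log\det_\zeta\Delta$, decompose the trace over $\lambda\in\Z$, use the symmetry $\Delta_{-\lambda}=\Delta_\lambda$, and apply the sum-version of the Fubini Theorem~\ref{fubini} to interchange $\regint_0^\infty dz$ with $\regsum_{\lambda}$, reading off the three correction terms from the homogeneous components $z^3 h_0, z^3 h_1, z^3 h_2$ of $f(z,\lambda)=z^3\Tr(\Delta_\lambda+z^2)^{-2}$.

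The one point the paper makes explicit and you gloss over is the choice of a threshold $N$ with $\Delta_\lambda$ invertible for $|\lambda|\ge N$, so that Fubini is applied only to $\sum_{\lambda\ge N}$ while the finitely many terms $1\le\lambda\le N-1$ are handled directly. This matters for the hypothesis $f\in C^\infty(\R_+^2)$ of Theorem~\ref{fubini}: if some $\Delta_\lambda$ with small $\lambda$ has a zero eigenvalue, then $z^3\Tr(\Delta_\lambda+z^2)^{-2}$ is singular at $z=0$ and the smoothness assumption fails. Since the correction integrals in \eqref{EqFubiniSum} are independent of $\lambda_0$, this adjustment costs nothing in the final formula, but it should be mentioned.
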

Needless to say
\begin{align}
\regsumwide_{\lambda=-\infty}^\infty f(\lambda) := 
\LIM_{N\to \infty} \sum_{\lambda=-N}^{N} f(\lambda)
= \LIM_{N\to \infty} \Bigl(\sum_{\lambda=1}^{N} f(\lambda) +
\sum_{\lambda=1}^{N} f(-\lambda) + f(0)\Bigr).
\end{align}

In the diploma thesis of B. Sauer \cite{Sau:ORT} the term $\int_0^\infty z^3 h_{2}(1,z) \log (z) dz$
has been identified in terms of $V,W$ and their derivatives at the boundary.

Note that by \Eqref{trace-2-expansion}, the correction terms $h_{0,1,2}$ in Theorem \ref{main-thm}
are the leading three (local) components of the polyhomogeneous asymptotic expansion of 
$\Tr(\Delta_\lambda + z^2)^{-2}$.

\section{Polyhomogeneous expansion of the resolvent trace}
In this section we establish a polyhomogeneous asymptotic expansion of the 
resolvent trace for $(\Delta_\lambda+z^2)^{-1}$ jointly in $(\lambda,z)\in \R_+^2$. 
The discussion is separated into two parts for the interior and the boundary parametrices. 
We begin with the interior parametrix where the polyhomogeneous expansion is 
a consequence of the strongly parametric elliptic calculus. 

\subsection{The interior parametrix}
We will use here freely the calculus of pseudo-differential operators 
with parameter, for a survey type exposition see \cite[Sec. 4 and 5]{Les:PDO}.

Consider the differential operators 
\begin{equation}
\begin{split}
\Delta_{\lambda,0}&=-\partial_x^2 + \lambda^2 V +W: C^\infty _0(0,1) \to C^\infty _0(0,1),\\
\Delta^\R_\lambda &=-\partial_x^2 + \lambda^2 V +W: C^\infty _0(\R) \to C^\infty _0(\R),
\end{split}
\end{equation}
where $V,W \in C^\infty(\R)$ with $V>0$. As before, $\Delta_\lambda$ is a self-adjoint 
extension of $\Delta_{\lambda,0}$ in $L^2[0,1]$, obtained by imposing separated 
Dirichlet or generalized Neumann boundary conditions. The boundary conditions will be specified 
in the next section. We write
\begin{equation}
\Delta (\lambda, z) :=\Delta_\lambda + z^2,\quad
\Delta^\R (\lambda, z):=\Delta^\R_\lambda +z^2.
\end{equation}
Then $\Delta^\R(\lambda,z)$ is elliptic in the parametric sense with parameter $(\lambda,z)$ in the cone 
$\Gamma = \R^+_{\lambda} \times \R^+_{z}$. The space of classical 
parameter dependent pseudo-differential operators of order $m$ is, as usual,
denoted by $\CL^{m}(\R;\Gamma)$. By \cite[Sec. II.9]{Shu:POS} 
$\Delta^\R(\lambda,z)$ admits a parametrix $R\equiv R(\lambda,z)\in \CL^{-2}(\R;\Gamma)$,
such that
$$\Delta^\R(\lambda,z)R-I, \ R\Delta^\R(\lambda,z)-I \in \CL^{-\infty}(\R;\Gamma).$$

Since $\textup{ord}R + \dim \R=-1<0$ the Schwartz kernel 
$k(\cdot, \cdot; \lambda, z)$ of $\Delta^\R(\lambda,z)^{-1}$
is a continuous function and on the diagonal it has an asymptotic expansion
\begin{equation}
\label{interior}
k(x,x;\lambda,z)
\sim \sum_{j=0}^\infty e_j \!\left(\frac{(\lambda,z)}{|(\lambda,z)|}\right)
|(\lambda,z)|^{-1-j}, \ |(\lambda,z)| \to \infty, \ (\lambda,z)\in \Gamma,
\end{equation}
see \cite[Theorem 5.1]{Les:PDO}. The functions $e_j$
are smooth on $\R\times (\Gamma \cap \mathbb{S}^1)$ and the expansion 
\Eqref{interior} is uniform for $x$ in compact subsets of $\R$.

We choose cutoff functions $\phi$ and $\psi$, with 
$\textup{supp}\, \phi, \textup{supp}\, \psi \subset (0,1)$,
such that $\textup{supp}\, \phi \subset \textup{supp}\, \psi$ and 
$\textup{supp}\, \phi \cap \textup{supp}\, d\psi=\emptyset$. 
We define $R^I:= \psi R \phi$ and put
\begin{equation}
 \begin{split}
  \Delta(\lambda,z) R^I &= [-\partial_x^2, \psi] R \phi + \psi (\Delta^\R(\lambda,z)R-I) \phi + \phi \\
          &=: \phi + R_2(\lambda,z)\phi.
 \end{split}
\end{equation}
Note that by the choice of cutoff functions $[-\partial_x^2, \psi]$ 
and $\phi$ have disjoint support and hence 
$[-\partial_x^2, \psi] R \phi \in \CL^{-\infty}(\R;\Gamma)$.
Moreover, $\Delta^\R(\lambda,z)R-I\in \CL^{-\infty}(\R;\Gamma)$ and hence 
$R_2(\lambda,z)\in \CL^{-\infty}(\R,\Gamma)$, however the Schwartz kernel of $R_2(\lambda,z)$
is compactly supported in $(0,1)^2$. Consequently, by \Eqref{interior} we find
\begin{equation}
\label{interior-expansion}
 \begin{split}
  \Tr (\Delta(\lambda,z)^{-1} \phi) 
&= \Tr R^I - \Tr (\Delta(\lambda,z)^{-1}R_2 \phi) \\
&= \Tr (\psi \Delta^\R(\lambda,z)^{-1} \phi) + O(|(\lambda,z)|^{-\infty})\\
&\sim \sum_{j=0}^\infty e_j \!\left(\frac{(\lambda,z)}{|(\lambda,z)|}\right)
|(\lambda,z)|^{-1-j}, \ |(\lambda,z)| \to \infty.
 \end{split}
\end{equation}
This establishes a polyhomogeneous asymptotic 
expansion for the trace of the resolvent $\Delta(\lambda,z)^{-1}$ 
in the interior as a consequence of the parametric pseudo-
differential calculus. 

\subsection{The boundary parametrix}

We construct a parametrix to $\Delta(\lambda,z)$ near the boundary $x=0$.
The parametrix construction near $x=1$ works ad verbatim.

Consider $l=-\partial_x^2$ acting on $C^\infty_0(\R)$.
The operator $l$ is essentially self-adjoint in $L^2(\R)$ and we write $\bar{l}$ for its self-adjoint extension.
For $0 \leq \theta <\pi$ let $L^\theta$ be $\bar{l}$ restricted to 
\begin{equation}
\dom (L^\theta) := 
\bigsetdef{f \in H^1(\R_+)}{\cos \theta \cdot f(0) + \sin \theta\cdot f'(0) =0}.
\end{equation}
For $\mu \in \C, \Re\, \mu >0$ the resolvent kernel 
of $(L^\theta + \mu^2)^{-1}$ is given by 
\begin{align}\label{kernel-theta}
 K_\theta (x,y;\mu) = \frac{1}{2\mu} 
\left[e^{-\mu |x-y|} + C(\mu,\theta) e^{-\mu (x+y)}\right], \
C(\mu,\theta) = \frac{\mu \sin \theta + \cos \theta}
{ \mu \sin \theta - \cos \theta}.
\end{align}

The kernel $K_\R(\cdot, \cdot ;\mu)$ of the resolvent $(\bar{l}+\mu^2)^{-1}$ is given by
\begin{align}\label{2.5a}
K_\R(x,y;\mu) = \frac{1}{2\mu} \exp(-\mu |x-y|).
\end{align}

Assume below $\mu > 0$ for simplicity. Then
\begin{equation}
 \begin{split}
  \label{kernel-theta-est}
 |K_\theta (x,y;\mu)| &\leq (1+|C(\mu,\theta)|) \frac{1}{2\mu} \exp(-\mu |x-y|) \\ 
&=  (1+|C(\mu,\theta)|) K_\R(x,y;\mu).
 \end{split}
\end{equation}
We will also need an estimate for a $j$-fold convolution of the 
resolvent kernels. Let $K_\R^j(x,y;\mu)$ denote the kernel of $(\bar{l}+\mu^2)^{-j}$. 
From the formula
\begin{equation}
\frac{\partial}{\partial \mu} (\bar{l}+\mu^2)^{-j} = 2\mu(-j) (\bar{l}+\mu^2)^{-j-1},
\end{equation}
we infer 
\begin{equation}\label{j-fold}
(\bar{l}+\mu^2)^{-j} = \frac{(-1)^{j-1}}{2^{j-1} (j-1)!}
  \Bigl(\frac{1}{\mu}\frac{\partial}{\partial \mu}\Bigr)^{j-1} (\bar{l}+\mu^2)^{-1}. 
\end{equation}
From \Eqref{j-fold} and the explicit formula \Eqref{2.5a} for $K_\R$ we find
\begin{equation}
 \begin{split}
\label{kernel-real}
 K_\R^j(x,y;\mu) &= \sum_{k=0}^{j-1} \frac{1}{k!}
                   \Bigl(\prod\limits_{l=1}^{j-k-1} \frac{2l-1}{l} \Bigr)
                  \frac{|x-y|^k}{2^j \mu^{2j-1-k}}e^{-\mu|x-y|}\\ 
      & \leq \frac{1}{2} \sum_{k=0}^{j-1} \frac{1}{k!} 
         \Bigl( \frac{|x-y|}{2\mu}\Bigr)^k \frac{1}{\mu^{2j-1}} e^{-\mu|x-y|} \\
&\leq \frac{1}{2\mu^{2j-1}}\exp\left (-\frac{\mu}{2}|x-y|\right).
 \end{split}
\end{equation}

Consider smooth potentials $V,W\in C^\infty_0 (\R_+)$, 
with $V(0) > 0$ and assume for the moment that $\textup{supp}V \subset [0,\delta]$, $\delta>0$ sufficiently small, 
such that $\| V-V(0) \|_\infty \leq \frac{1}{2}V(0)$.
Abbreviate $\mu^2:= \lambda^2 V(0) + z^2$ and $\widetilde{V}:=V-V(0)$.
Consider 
\begin{align*}
(L^\theta + \lambda^2V + W + z^2)^{-1} &=
   (I + (L^\theta + \mu^2)^{-1}(\lambda^2\widetilde{V} + W))^{-1} (L^\theta +
   \mu^2)^{-1} \\
    & = \sum_{j=0}^\infty (-1)^j \left( (L^\theta + \mu^2)^{-1}(\lambda^2\widetilde{V} + W)\right)^j 
      (L^\theta + \mu^2)^{-1}.
\end{align*}
Note that the Neumann series converges in the operator norm sense, 
since for $\|\widetilde{V} \|_\infty \leq \frac{1}{2}V(0)$ and $z \gg 0$
sufficiently large we find for the operator norm
\begin{equation}\label{EqKernelEst}
 \| (L^\theta + \mu^2)^{-1}(\lambda^2\widetilde{V} + W) \| \leq
\frac{\lambda^2\|\widetilde{V}\|_\infty + \|W\|_\infty}
{\lambda^2 V(0) + z^2} <1.
\end{equation}

We also need to justify the corresponding Neumann series 
expansion for the resolvent kernel. Suppose that $V,W$ 
are both supported in $[0,\delta]$ and recall the estimates 
\Eqref{kernel-theta-est} and \Eqref{kernel-real}. 
Then for real $(\lambda,z)$ and for $0\leq x,y <\delta$
we find
\begin{align*}
&\left|\left( (L^\theta + \mu^2)^{-1}(\lambda^2\widetilde{V} + W)\right)^j (x,y) \right| \leq 
(\lambda^2\|\widetilde{V}\|_\infty + \|W\|_\infty)^j (1+|C(\mu,\theta)|)^j \\
&\times \int_\R \cdots \int_\R K_\R(x,s_1;\mu) \cdots K_\R(s_{j-1},y;\mu) \,
ds_1 \cdots ds_{j-1} \\ & = (\lambda^2\|\widetilde{V}\|_\infty + \|W\|_\infty)^j (1+|C(\mu,\theta)|)^j
K_\R^{j}(x,y;\mu) \\
& \leq \frac{\mu}{2} \Bigl(\frac{(\lambda^2\|\widetilde{V}\|_\infty + \|W\|_\infty) (1+|C(\mu,\theta)|)}
{\mu^2}\Bigr)^j \exp \left(-\frac{\mu}{2}|x-y|\right).
\end{align*}

Note that $|C(\mu,\theta)|\to 1$ as $\mu \to \infty$ and consequently 
for $\mu \geq \mu_0$ large enough, the sequence of kernels 
converges uniformly for $0\leq x,y <\delta$ and
\begin{multline}
\label{N-series-est}
\sum_{j=0}^\infty \Bigl|\bl (L^\theta + \mu^2)^{-1}(\lambda^2\widetilde{V} +
W)\br^j (x,y) \Bigr|  \\
\leq \frac{\frac 12 \mu^3 \exp (-\mu|x-y|/2)}{\mu^2 - (\lambda^2\|\widetilde{V}\|_\infty + \|W\|_\infty) (1+|C(\mu,\theta)|)}.
\end{multline}

Similar arguments also work for the derivatives of the kernels. Another convolution by $K_\R$
then yields 
\begin{prop}\label{x-derivative}
Let $V,W\in C^\infty_0(\R)$ with $V(0) > 0$ and $\| V-V(0) \|_\infty \leq \frac{1}{2}V(0)$.
Put $\mu^2=\lambda^2V(0) +z^2$. Then the kernel of $(L^\theta + \lambda^2V + W + z^2)^{-1}$
satisfies uniformly for $\mu\geq \mu_0>0$ and $\lambda \geq 0$
\begin{align}
|\partial_x^j (L^\theta + \lambda^2V + W + z^2)^{-1}(x,y)| \leq C(\mu_0)
\mu^{j-1} \exp (-\mu|x-y|/2),
\quad j=0,1.
\end{align}
\end{prop}

Consider the differential operator $\Delta_{\lambda,0}=-\partial_x^2 + \lambda^2V+W$ on 
$C^\infty_0(\R_+)$, and its self-adjoint realization $L^\theta + \lambda^2V+W$ in $L^2(\R_+)$.
We can now write down a parametrix for 
$\Delta^\theta(\lambda,z):=L^\theta + \lambda^2V+W + z^2$ near $x=0$.
We consider two cutoff functions $\phi$ and $\psi$, see Figure \ref{fig:CutOff}, both 
identically one in an open neighborhood of $x=0$ with compact 
support $\textup{supp}\, \phi, \textup{supp}\, \psi \subset [0,1)$
such that $\textup{supp}\, \phi \subset \textup{supp}\, \psi$ and 
$\textup{supp}\, \phi \cap \textup{supp}\, d\psi = \emptyset$.

\begin{figure}[h]
\begin{center}

\begin{tikzpicture}[scale=1.3]
\draw[->] (-0.2,0) -- (7.7,0);
\draw[->] (0,-0.2) -- (0,2.2);

\draw (-0.2,2) node[anchor=east] {$1$};
\draw (6.5,-0.2) node[anchor=north] {$\delta$} -- (6.5,0.2);
\draw (7.5,-0.2) node[anchor=north] {$1$} -- (7.5,0.2);

\draw (0,2) -- (4,2);
\draw (1,2) .. controls (2.4,2) and (1.6,0) .. (3,0);
\draw[dashed] (1,-0.2) -- (1,2.2);
\draw[dashed] (3,-0.2) -- (3,2.2);
\draw (4,2) .. controls (5.4,2) and (4.6,0) .. (6,0);
\draw[dashed] (4,-0.2) -- (4,2.2);
\draw[dashed] (6,-0.2) -- (6,2.2);
\draw (1.5,1) node {$\phi$};
\draw (4.5,1) node {$\psi$};

\end{tikzpicture}

\caption{\label{fig:CutOff} The cutoff functions $\phi$ and $\psi$.}
\end{center}
\end{figure}

Given $V,W\in C^\infty[0,1]$ with $V(0) >0$ we set 
\begin{equation}\label{EqWpsiVspi}
 W_\psi=\psi W \text{ and } V_\psi=\psi V=: V(0) + \widetilde{V}_\psi,
\end{equation}
where we choose $\textup{supp} \, \psi$ 
small enough to guarantee that $\|\widetilde{V}_\psi\|_\infty \leq \frac{1}{2} V(0)$. Then we put
\begin{align*}
 R_{\partial} := \psi (L^\theta + \lambda^2V_\psi + W_\psi + z^2)^{-1} \phi.
\end{align*}
Clearly, $R_{\partial}$ maps into $\dom (L^\theta)=\dom (\Delta^\theta(\lambda,z))$. Moreover we compute 
\[
\Delta^\theta(\lambda,z)R_{\partial} = [-\partial_x^2 ,\psi] 
(L^\theta + \lambda^2V_\psi + W + z^2)^{-1} \phi + \phi
=:\phi + R_3(\lambda,z).
\]
Note that by the choice of cutoff functions, $[-\partial_x^2, \psi]$ 
and $\phi$ have disjoint support. Let $d>0$ denote the minimum of 
$|x-y|$ for $x\in \textup{supp}[-\partial_x^2, \psi]$ and 
$y\in \textup{supp}\phi$. Then there exists a constant $C>0$ such that by 
Proposition \ref{x-derivative}
\begin{align*}
\left| R_3(\lambda,z) (x,y) \right| 
\leq C \cdot 
\exp (-\mu d / 2)= O(\mu^{-\infty}), \quad \textup{as} \ \mu\to \infty.
\end{align*}
Consequently we find 
\begin{align}
 \Tr(\Delta^\theta(\lambda,z)^{-1}\phi) = \Tr R_{\partial} + O(|(\lambda,z)|^{-\infty}), 
\quad |(\lambda,z)| \to \infty,
\end{align}
and hence it suffices to establish a polyhomogeneous expansion for the 
trace of the boundary parametrix $R_{\partial}$. Write 
\begin{equation}\label{EqKplus}
 K_+ (x,y;\mu) = - \frac{1}{2\mu}C(\mu,\theta) e^{-\mu (x+y)},
\end{equation}
so that $K_\theta=K_\R+K_+$, cf. \Eqref{kernel-theta} and \Eqref{2.5a}. Moreover we abbreviate 
\begin{equation}\label{EqLaVW}
\lambda(V,W):=\lambda^2\widetilde{V}_\psi + W_\psi.
\end{equation}
Then, using $\mu^2 = \lambda^2 V(0) + z^2$, we can write
\begin{align*}
 R_{\partial} = \psi (L^\theta + \lambda^2V_\psi + W_\psi + z^2)^{-1} \phi 
= \psi (L^\theta + \mu^2 + \lambda(V,W))^{-1} \phi.
\end{align*}
 Then, by \Eqref{N-series-est} we may expand the boundary 
parametrix $R_\partial$ as a Neumann series as follows
\begin{align*}
R_\partial 
    =& \sum_{j=0}^\infty (-1)^j \psi \bigl[K_\theta \lambda(V,W)\bigr]^j 
K_\theta \phi \\
    =& \sum_{j=0}^\infty (-1)^j \psi \Bigl( \bigl[K_\theta \lambda(V,W)\bigr]^j 
          K_\theta - \bigl[K_\R \lambda(V,W)\bigr]^j K_\R \Bigr) \phi \\
     &+  \sum_{j=0}^\infty (-1)^j \psi \bigl[K_\R \lambda(V,W)\bigr]^j 
            K_\R \phi =: R^0_\partial + R^1_\partial.
\end{align*}
By similar arguments as in the previous subsection, $\Tr\bl R^1_\partial\br=
\Tr\bl R^I \br + O(\mu^{-\infty})$ 
and hence a polyhomogeneous expansion of the boundary parametrix follows 
from such an expansion of $R^0_\partial$. We write
\begin{align*}
R^0_\partial = \sum_{j=0}^{\infty} (-1)^j \psi \left( \left[K_\theta \lambda(V,W)\right]^j 
K_\theta -  \left[K_\R \lambda(V,W)\right]^j K_\R \right) \phi 
=: \sum_{j=0}^{\infty} R^{0j}_\partial.
\end{align*}

Before we proceed we note that for elements $a, b$
in a not necessarily commutative ring we have the identity
\begin{equation}\label{EqBinomId}
(a+b)^n-b^n= \sum_{j=0}^{n-1} b^j a (a+b)^{n-1-j},
\end{equation}
as one checks by induction. Consequently
\begin{equation}\label{EqRdel0j}
 \begin{split}
R^{0j}_\partial =& (-1)^j\psi \Bigl( \bigl[K_\theta \lambda(V,W)\bigr]^j 
                K_\theta -  \bigl[K_\R \lambda(V,W)\bigr]^j K_\R \Bigr) \phi \\
       =& (-1)^j \psi  \sum_{k=0}^{j-1} (K_\R \lambda(V,W))^k (K_+
       \lambda(V,W)) (K_\theta \lambda(V,W))^{j-k-1} K_\R \phi\\
       &\quad +(-1)^j \psi   (K_\theta \lambda(V,W))^{j} K_+ \phi.
   \end{split}      
\end{equation}

To expand $R^0_\partial$ we write for a fixed $M\in \N$
\begin{equation}\label{EqRsplit}
R^0_\partial = \sum_{j=0}^{M-1} R^{0j}_\partial + \sum_{j=M}^\infty
R^{0j}_\partial.
\end{equation}

The first task is to show that the trace of the second sum in \Eqref{EqRsplit}
decays sufficiently fast, more concretely $O(\mu^{-M-3/2}), \mu\to\infty$.
This is the content of the next proposition, cf. also \cite[Cor. 4.2]{Ver:MRT} 
where a parallel result is obtained for elliptic boundary value problems by a 
different method. The second task, which will occupy the whole Subsection \ref{SSPolBou}, then
is to show that the first sum in \Eqref{EqRsplit} has a polyhomogeneous
expansion. Since we may choose $M$ as large as we please we will then
obtain Proposition \ref{phg-trace}.

\begin{prop}\label{R-expansion}
Let $M\in \N, \gamma, \nu\in\N_0$ be fixed. 
For $\mu_0$ sufficiently large there exist constants $C>0, 0<q<1$ such that
for $N\ge M$ and $\mu\ge \mu_0$ 
\begin{equation}
\bigl\|\pl^\gamma_\lambda \pl_z^\nu  R^{0N}_\pl  \bigr\|_\tr 
\leq C \cdot N\cdot q^{N-M} \cdot \mu^{-M-\gamma-\nu-3/2},
\end{equation}
and consequently
\begin{equation}\label{EqRemEst}  
 \Bigl\|\pl^\gamma_\gl \pl_z^\nu \sum_{j=M}^\infty R^{0j}_\partial \Bigr\|_\tr =
 O\bl \mu^{-M-\gamma-\nu-3/2}\br, \text{ as } \mu\to\infty.
\end{equation}
Here $\|\cdot\|_\tr$ denotes the trace norm.
\end{prop}
\begin{proof} We treat the case $\gamma=\nu=0$. The case of general
 $\gamma,\nu$ follows easily since, e.g., 
 \begin{align*}
\pl_z \bl L^\theta + \mu^2 \br\ii      &= -2z  \bl L^\gt+\mu^2)^{-2},
\intertext{resp.}
   \pl_\gl \bl L^\theta + \mu^2 \br\ii &= -2 \gl \, V(0)\,  \bl
   L^\gt+\mu^2)^{-2},
  \end{align*}
and similarly for the other involved kernels.

Each of the $N$ summands of $R^{0N}_\pl$ is of the form 
\begin{equation}
P_N=\psi \, K_0\prod\limits_{j=1}^N  \gl(V,W) K_j \, \phi,
\end{equation}
where $K_j, j=0,\ldots,N$, is either $K_\R, K_+,$ or $K_\gt$.
Note that due to the factor $\gl(V,W)$ all kernels (may be assumed
to have) support $\subset [0,\delta)$. 

In view of \Eqref{EqKernelEst} we may choose $\mu_0>0$ sufficiently large such that
there exists a $0<q<1$ such that
\begin{equation}
     \|\gl(V,W) K_j\|\le q
    \end{equation}
for $\mu\ge \mu_0$ and all $j$.
Thus we may estimate the trace norm of $P_N$ by
\begin{equation}\label{EqTraNorEst}
  \bigl\| P_N \bigr\|_\tr \le \| \psi K_0 \|_\HS\cdot
  q^{N-M}\cdot \bigl\|  \prod_{j=1}^M \gl(V,W) K_j \phi \bigr\|_\HS,
\end{equation}
where $\|\cdot\|_\tr, \|\cdot\|_\HS$ denote the trace norm resp.
the Hilbert-Schmidt norm.

Of the $K_j$ in \Eqref{EqTraNorEst} at least one equals $K_+$ (cf.
\Eqref{EqRdel0j}) and by choosing those factors whose norm
we estimate by $q$ appropriately we can arrange that in \Eqref{EqTraNorEst}
at least one of the $K_j, j=1,\ldots, M$ equals $K_+$. So we have
\begin{enumerate}
\item All $K_j, j=0,\ldots, N$ satisfy the estimate 
 \begin{equation}
  |K_j(x,y;\mu)| \leq C_1 \frac{1}{\mu} e^{-\mu |x-y|}, 
 \end{equation} 
\item At least one kernel $K_j$ satisfies 
 \begin{equation}
  |K_j(x,y;\mu)| \leq C_1 \frac{1}{\mu} e^{-\mu (x+y)}.
 \end{equation} 
\end{enumerate}
For the Hilbert-Schmidt norm of $\psi K_0$ we have
\begin{equation}\label{Eq1209281}   
     \|\psi K_0 \|_\HS^2  \le \frac{C_1^2}{\mu^2} \int_0^1\int_0^1 e^{-2\mu
     |x-y|} dx dy=O(\mu^{-3}), \text{ as } \mu\to\infty.   
\end{equation}
For $Q_M:=\prod_{j=1}^M \gl(V,W) K_j$ we claim that
\begin{equation}\label{EqQMEst}  
 | Q_M(x,y;\gl,\mu)|\le \sum_{\ga,\gb} c_{\ga\gb} \cdot \mu^{-\ga}\cdot
 \max(x,y)^\gb \cdot e^{-\mu(x+y)},
\end{equation}
where the sum is over finitely many $\ga, \gb$ with the restriction
$\ga+\gb\ge M-1$. 

The Hilbert-Schmidt norm square of each summand on the right of
\Eqref{EqQMEst} can be estimated by
\begin{equation}\label{Eq1209283}   
  \begin{split}
      2 \mu^{-2\ga} &\int_0^1\int_0^y y^{2\gb} e^{-2\mu(x+y)} dx dy\\
   &\le 2 \mu^{-2\ga} \int_0^\infty y^{2\gb} e^{-2\mu y} dy\cdot
   \int_0^\infty e^{-2\mu x} dx\\
   &= O(\mu^{-2\ga-2\gb-2})=O(\mu^{-2M}), \text{ as } \mu\to\infty,
  \end{split}
\end{equation}
thus \Eqref{EqQMEst} implies
\begin{equation}\label{Eq1209284}  
 \bigl\| Q_M\bigr\|_\HS = O\bl \mu^{-M}\br,
\end{equation}
and \Eqref{EqTraNorEst}, \Eqref{Eq1209281} and \Eqref{Eq1209284} give the
claim. It therefore remains to prove \Eqref{EqQMEst}, which we single
out separately below. 
\end{proof}

\subsubsection{Proof of \Eqref{EqQMEst}} We proceed by induction on $M\in \N$.
Recall from \Eqref{EqLaVW} $\gl(V,W)=\gl^2 \tV_\psi+W_\psi,$
$\tV_\psi=\psi V-V(0)$. Recall furthermore 
from \Eqref{EqWpsiVspi} that $\psi$ was chosen such that
$\|\widetilde{V}_\psi\|_\infty \leq \frac{1}{2}V(0)$. Moreover,
since $\tV_\psi(0)=0$ and $\tV$ is smooth, we have $|\tV_\psi(x)|\le c\cdot x$
for some $c>0$. Thus
\[
   |\gl(V,W)(x)|\le c (\gl^2\cdot x+1),
\]
and hence
\[ 
|\gl(V,W) K_j(x,y;\gl,\mu)|\le c(\mu\cdot x + \mu\ii) e^{-\mu|x-y|},
\]
resp., for at least one $j$, $e^{-\mu(x+y)}$ instead of $e^{-\mu|x-y|}$. This
establishes \Eqref{EqQMEst} for $M=1$.

For the inductive step we treat the case $x\le y$. Though the kernels are not 
symmetric, the estimates for $x\ge y$ are similar. We pick one of the
summands on the right of \Eqref{EqQMEst}
\begin{align*} 
  k_1(x,y;\mu)&=\mu^{-\ga} \max(x,y)^\gb e^{-\mu(x+y)}, \quad \ga+\gb\ge M-1,\\
  \intertext{and}
  k_2(x,y;\mu)&= (\mu\cdot x + \mu\ii) e^{-\mu|x-y|}.
 \end{align*}
We split the integral $\int_0^1 k_1(x,z;\mu)k_2(z,y;\mu) dz$ into
the two parts $\int_0^y$ and $\int_y^1$. 
In the first case $z\in [0,y]$ we find
\begin{align*}
\int_0^y | &k_1(x,z;\mu) k_2(z,y;\mu) | dz \\
           &\le \mu^{-\ga} (\mu y + \mu\ii) e^{-\mu(x+y)}  y^\gb \int_0^y 1 dz \\ 
           & \le C_2 \bl\mu^{-(\ga-1)} y^{\gb+2} + \mu^{-(\ga+1)}
                      y^{\gb+1}\br e^{-\mu(x+y)};
\end{align*}
certainly $\ga-1+\gb+2\ge (M+1)-1, \ga+1+\gb+1\ge (M+1)-1$.

Secondly,
\begin{equation}\label{Eq1209286}   
  \begin{split}
   \int_y^1 | &k_1(x,z;\mu) k_2(z,y;\mu) | dz \\
         &  \le \mu^{-\ga} e^{-\mu(x-y)} \int_y^1 z^\gb (\mu z + \mu\ii)
                    e^{-2\mu z} dz\\
         & \le \mu^{-(\ga+\gb+1)} e^{-\mu(x-y)} \int_{\mu y}^\infty z^\gb (z+\mu\ii)
         e^{-2z} dz\\
         &\le C_3\bl \mu^{-(\ga+\gb+1)} + \mu^{-(\ga+\gb+2)} + \mu^{-\ga} y^{\gb+1} +
             \mu^{-(\ga+2)} y^\gb\br e^{-\mu(x+y)}.
  \end{split}
\end{equation}
In the last step we have used that for $\gamma>0$
\begin{equation}\label{eq:1209287}  
    \int_R^\infty z^\delta e^{-2z} dz \le C(\delta) (1+R^\delta) e^{-2R},\quad 0\le
    R<\infty.    
\end{equation}
The last line of \Eqref{Eq1209286} is indeed of the form as the right
hand side of \Eqref{EqQMEst} with $\ga+\gb+2\ge \ga+\gb+1\ge (M+1)-1$.
This establishes the inductive step and \Eqref{EqQMEst} is proved.
\hfill \qed

\subsection{The polyhomogeneous expansion of the boundary
parametrix}\label{SSPolBou}
It follows from Proposition \plref{R-expansion} that
a polyhomogeneous expansion of the trace of the boundary parametrix $R^0_\partial$ up
to a given order $O(\mu^{-M-3/2})$ follows from a polyhomogeneous expansion 
of the trace of the finitely many summands 
\[
\sum_{j=0}^{M-1} R^{0j}_\partial = \sum_{j=0}^{M-1} (-1)^j \psi \left( \left[K_\theta \lambda(V,W)\right]^j 
K_\theta -  \left[K_\R \lambda(V,W)\right]^j K_\R \right) \phi.
\]
Since $M$ can be chosen arbitrarily this in fact establishes a
full asymptotic expansion of the trace of $R^0_\pl$.
We establish a polyhomogeneous  expansion of the finitely many summands above 
using the microlocal formalism  of blowups. 

The kernels $K_+$ and $K_\R$ are functions on $\R^+_{1/\mu} \times (\R^+)^2_{(x,y)}$ 
with non-uniform behaviour at the diagonal $\mathscr{D}:=\{\mu=\infty, x=y\}$ and 
the highest codimension corner $\mathscr{A}:=\{\mu=\infty, x=y=0\}$. This non-uniform 
behaviour is resolved by considering an appropriate blowup $\mathscr{M}^2_b$ of $\R_+\times \R_+^2$
at $\mathscr{A}$ and $\mathscr{D}$, a procedure introduced by Melrose, see \cite{Mel:APS}, 
such that both kernels lift to polyhomogeneous distributions on the manifold with corners 
$\mathscr{M}^2_b$ in the sense of the following definition.

\begin{defn}\label{phg}
Let $X$ be a manifold with corners, with embedded boundary faces and the corresponding boundary defining functions
$\{(H_i,\rho_i)\}_{i=1}^N$. We consider distributions on $X$ that are locally restrictions of distributions 
defined across the boundaries of $X$. A distribution $\w$ on $X$ is said to be
conormal if it is of stable regularity under repeated application of vector fields on $X$ which lie tangent to all boundary faces. 
An index set $E_i = \{(\gamma,p)\} \subset {\mathbb C} \times {\mathbb N}$ 
satisfies the following hypotheses:
\begin{enumerate}
\item $\Re(\gamma)$ accumulates only at plus infinity,
\item For each $\gamma$ there is $P_{\gamma}\in \N_0$, such
that $(\gamma,p)\in E_i$ iff $p \leq P_\gamma$,
\item If $(\gamma,p) \in E_i$, then $(\gamma+j,p') \in E_i$ for all $j \in {\mathbb N}$ and $0 \leq p' \leq p$. 
\end{enumerate}
An index family $E = (E_1, \ldots, E_N)$ is an $N$-tuple of index sets. 
A conormal distribution $\w$ is polyhomogeneous on $X$ 
with index family $E$, we write $\w\in \mathscr{A}_{\textup{phg}}^E(X)$, 
if $\w$ is conormal and if in addition, near each $H_i$, 
\begin{align}\label{A}
\w \sim \sum_{(\gamma,p) \in E_i} a_{\gamma,p} \rho_i^{\gamma} (\log \rho_i)^p, \ 
\textup{as} \ \rho_i\to 0,
\end{align}
with coefficients $a_{\gamma,p}$ conormal on $H_i$, polyhomogeneous with index $E_j$
at any $H_i\cap H_j$. 
\end{defn}

There is also a space of polyhomogeneous distributions on a manifold with corners $X$ 
that are conormal to an embedded submanifold $Y\subset X$. The precise definition is given for instance in \cite{Maz:ETO}.
Morally, conormality at a submanifold models the singular behaviour of an oszillatory Fourier integral for 
some classical symbol of a prescribed order. $\mathscr{A}_{\textup{phg}}^E(X,Y)$
denotes the space of distributions conormal to $Y$, with
polyhomogeneous expansions as in \Eqref{A} at all boundary faces and 
with coefficients conormal to the intersection of $Y$ with each boundary face. 

We now continue with the definition of a blowup $\mathscr{M}^2_b$, so that 
the kernels $K_+,K_\R$ lift to polyhomogeneous distributions conormal 
to an embedded submanifold. Blowing up $\R_+\times \R_+^2$ at $\mathscr{A}$ and $\mathscr{D}$
amounts in principle to introducing polar coordinates in $\R_+\times \R_+^2$ 
at $\mathscr{A}$ and $\mathscr{D}$ together with a unique minimal differential 
structure with respect to which these coordinates are smooth. Similar construction has been employed
in \cite{Moo:HKA} and \cite{MazVer:ATM} with the difference that the blowups there
are parabolic in time direction. The resulting blowup space $\mathscr{M}^2_b$ is illustrated
in Figure \ref{blowup}.

\begin{figure}[h]
\begin{center}
\begin{tikzpicture}
\draw (0,0.7) -- (0,2);
\draw (-0.7,-0.5) -- (-2,-1);
\draw (0.7,-0.5) -- (2,-1);
\draw (0,0.7) .. controls (-0.5,0.6) and (-0.7,0) .. (-0.7,-0.5);
\draw (0,0.7) .. controls (0.5,0.6) and (0.7,0) .. (0.7,-0.5);
\draw (-0.7,-0.5) .. controls (-0.5,-0.6) and (-0.4,-0.7) .. (-0.3,-0.7);
\draw (0.7,-0.5) .. controls (0.5,-0.6) and (0.4,-0.7) .. (0.3,-0.7);
\draw (-0.3,-0.7) .. controls (-0.3,-0.3) and (0.3,-0.3) .. (0.3,-0.7);
\draw (-0.3,-1.4) .. controls (-0.3,-1) and (0.3,-1) .. (0.3,-1.4);
\draw (0.3,-0.7) -- (0.3,-1.4);
\draw (-0.3,-0.7) -- (-0.3,-1.4);
\node at (1.2,0.7) {\large{rf}};
\node at (-1.2,0.7) {\large{lf}};
\node at (1.1, -1.2) {\large{tf}};
\node at (-1.1, -1.2) {\large{tf}};
\node at (0, -1.7) {\large{td}};
\node at (0,0.1) {\large{ff}};
\draw[dashed] (0.1,-0.15) -- (0.1,2);
\draw[dashed] (0.1,-0.15) -- (-2,-0.95);
\draw[dashed] (0.1,-0.15) -- (2,-0.9);
\node at (0.6,1.9) {$\mu^{-1}$};
\node at (-2,-0.75) {$x$};
\node at (2,-0.7) {$y$};
\end{tikzpicture}
\end{center}
\caption{\label{blowup} The blowup space $\mathscr{M}^2_b$.}
\end{figure}

We make the projective coordinates on $\mathscr{M}_b^2$ explicit near the top corner and near td. 
Near the top corner of ff away from tf the projective coordinates are given by
\begin{equation}\label{top-coord}
\rho=\frac{1}{\mu}, \  \xi=\frac{x}{\rho}, \ \widetilde{\xi}=\frac{y}{\rho},
\end{equation}
where in these coordinates $\rho, \xi, \widetilde{\xi}$ are the defining functions of 
the faces ff, rf and lf respectively. The projective coordinates on $\mathscr{M}^2_b$ near the top of td away 
from tf are given by 
\begin{equation}\label{d-coord}
\eta=\frac{1}{\mu y}, \ S =\mu (x-y), y.
\end{equation}

In these coordinates tf is the face in the limit $|S|\to \infty$, ff and td are defined by 
$y, \eta$, respectively. The blowup space $\mathscr{M}^2_b$ is related to the original 
space $\R_+\times \R_+^2$ via the obvious `blow-down map'
\[
\beta: \mathscr{M}_b^2\to \R_+\times \R_+^2,
\]
which is in local coordinates simply the coordinate change back to $(1/\mu, x,y)$. 
The only difference between $\mathscr{M}^2_b$ and the heat space for incomplete
conical or edge singularities in \cite{Moo:HKA} and \cite{MazVer:ATM} 
is that here the blowup is not parabolic in $\mu^{-1}$-direction.

One can easily check in local projective coordinates above that the
kernels $K_+$ and $K_\R$ both lift to polyhomogeneous 
distributions on $\mathscr{M}^2_b$, the latter being conormal to $\beta^*\{x=y\}$. 
Put for any $k\in \N_0$ 
\begin{align}
E_k:=\{(j,0)\in \N\times \N \mid j \geq k\}.
\end{align}

Then the index set of $\beta^*K_\R$ is given by $E_1$ at ff and td, 
by $E_0$ at rf and lf. The index sets of $K_+$ are the same at ff, rf and lf, 
and given by $E_{\infty}$ at tf, i.e. $\beta^*K_+$ is vanishing to infinite order at the temporal face tf. 

We denote by $\mathscr{A}_{\textup{phg}}^{l,p,E_{\textup{lf}}, 
E_{\textup{rf}}}(\mathscr{M}^2_b, \beta^*\{x=y\})$ 
the space of polyhomogeneous distributions on
$\mathscr{M}^2_b$ conormal up to $\beta^*\{x=y\}$, with index set
$E_l, l\in \N$ at ff, the index set $E_p, p\in \N$ at td, index sets $(E_{\textup{lf}}, E_{\textup{rf}})$ 
at lf and rf, respectively, and vanishing to infinite order at td. 
The space $\mathscr{A}_{\textup{phg}}^{l,p,E_{\textup{lf}}, 
E_{\textup{rf}}}(\mathscr{M}^2_b)$ denotes the subspace 
of polyhomogeneous distributions that are smooth across $\beta^*\{x=y\}$. 

Clearly, $K_\R \in \mathscr{A}_{\textup{phg}}^{1,1,E_0, E_0}(\mathscr{M}^2_b,\beta^*\{x=y\})$, 
as the Schwartz kernel of $(\bar{l} +\mu^2)^{-1}$ inside the 
strongly parametric calculus. Moreover $K_+ \in \mathscr{A}_{\textup{phg}}^{1,\infty ,E_0, E_0}
(\mathscr{M}^2_b)$. Polyhomogeneity for the various compositions 
of $K_+$ and $K_\R$, or more generally of any 
$K_A \in \mathscr{A}_{\textup{phg}}^{l,p,E_{\textup{lf}}, E_{\textup{rf}}}(\mathscr{M}^2_b)$ 
and $K_B \in \mathscr{A}_{\textup{phg}}^{l',\infty,E'_{\textup{lf}}, E'_{\textup{rf}}}(\mathscr{M}^2_b)$
is asserted by in the following

\begin{prop} \cite[Prop. 3.2]{Ver:MRT}
\label{composition}
For index sets $E_{\lf}$ and $E'_{\rf}$ such that $E_{\lf}+E'_{\rf}>-1$, we have
\[
\mathscr{A}_{\textup{phg}}^{l,p,E_{\textup{lf}}, E_{\textup{rf}}}(\mathscr{M}^2_b) \circ 
\mathscr{A}_{\textup{phg}}^{l',\infty,E'_{\textup{lf}}, E'_{\textup{rf}}}(\mathscr{M}^2_b,\beta^*\{x=y\}) \subset 
\mathscr{A}_{\textup{phg}}^{l+l'+1,\infty, E'_{\textup{lf}}, E_{\textup{rf}}}(\mathscr{M}^2_b).
\]
\end{prop}

This composition result is proved on the basis of the Pushforwrad theorem by Melrose \cite{Mel:APS}
using the so called triple space construction. Asymptotic expansion for the trace of each polyhomogeneous
kernel in the space $\mathscr{A}_{\textup{phg}}^{l,\infty,E_{\textup{lf}}, E_{\textup{rf}}}(\mathscr{M}^2_b)$
is a simple consequence of the Pushforward theorem as well.

\begin{prop} \cite[Prop. 4.3]{Ver:MRT}
\label{trace-boundary}
For any $K \in 
\mathscr{A}_{\textup{phg}}^{l,\infty,E_{\textup{lf}}, E_{\textup{rf}}}(\mathscr{M}^2_b)$
and any cutoff function $\phi \in C^\infty_0(\R_+)$ with $\phi \equiv 1$ 
in an open neighborhood of zero, we find for $(E_{\textup{lf}} + E_{\textup{rf}} + 1) >-1$ that
\begin{align}
\Tr (K\phi) = \int_0^\infty K(x,x; \mu)\phi(x) dx \sim \sum_{j=0}^\infty 
a_j \mu^{-(l+1)-j}, \ \mu \to \infty.
\end{align} 
\end{prop}

We can now employ Proposition \ref{composition} together with Proposition \ref{trace-boundary}
in order to derive a polyhomogeneous expansion of the finite sum (recall 
$\lambda(V,W)=\lambda^2\widetilde{V}_\psi + W_\psi$)
\[
\sum_{j=0}^{M-1} (-1)^j \psi \left( \left[K_\theta \lambda(V,W)\right]^j 
K_\theta -  \left[K_\R \lambda(V,W)\right]^j K_\R \right) \phi = \sum_{j=0}^{M-1}R^{0j}_{\partial}. 
\]
Each $R^{0j}_{\partial}$ is finite number of 
summands of the form $K(j,p), j\leq (M-1)$, which are given by a convolution of 
$(j+1)$ kernels $K_\R $ and $K_+$, with at least one $K_+$ and  
$p(\leq j)$ times $\lambda^2 \widetilde{V}_\psi$. Note that 
$\widetilde{V}_\psi(x)=O(x)$ as $x\to 0$, and is smooth so that 
\begin{align*}
K_\R\widetilde{V}_\psi\in  \mathscr{A}_{\textup{phg}}^{2,1,E_1, E_0}(\mathscr{M}^2_b, \beta^*\{x=y\}),
\quad K_+\widetilde{V}_\psi\in  \mathscr{A}_{\textup{phg}}^{2,\infty,E_1, E_0}(\mathscr{M}^2_b). 
\end{align*}
Consequently we find by Proposition \ref{composition}
\[
K(j,p) \in \lambda^{2p} \mathscr{A}_{\textup{phg}}^{2j+1+p,\infty,E_0, 
E_0}(\mathscr{M}^2_b).
\]
Proposition \ref{trace-boundary} now implies 
\[
\Tr K(j,p) \sim \sum_{i=0}^\infty a_i \frac{\lambda^{2p}}
{\mu^{2(j+1)+p+i}} =: \sum_{i=0}^\infty a_i^{jp} (\lambda, z),
\]
where each $a_i^{jp}$ is homogeneous in $(\lambda, z)$ of homogeneity 
degree $(p-2(j+1)-i)$. Consequently, overall we obtain
\begin{equation}
\label{boundary-expansion}
\Tr \sum_{j=0}^{M-1}R^{0j}_{\partial} 
\sim \sum_{i=0}^\infty e_i(\lambda,z), \ |(\lambda,z)| \to \infty,
\end{equation}
where each $e_i\in C^{\infty}(\R^2_+\setminus \{(0,0)\})$ is homogeneous
of order $(-2-i)$ jointly in both variables. We have now all ingredients
to prove Proposition \ref{phg-trace}.

\subsection{Proof of Proposition \ref{phg-trace}}\label{sub-int}
This is now a consequence of the interior expansion \Eqref{interior-expansion}, 
Proposition \plref{R-expansion} and \Eqref{boundary-expansion}. 
Comparing \Eqref{R-expansion} and \Eqref{boundary-expansion} we see that
the leading term in the polyhomogeneous expansion of $\Tr(\Delta_\lambda+z^2)^{-1}$ indeed 
comes from the interior. \hfill\qed

\subsection{Proof of Theorem \ref{trace-sum}}
As an application of the polyhomogeneity of the resolvent trace 
we now prove Theorem \plref{trace-sum} and 
clarify in which sense the trace expansions 
of $\Tr(\Delta_\lambda+z^2)^{-2}$ sum up to the trace expansion 
of $\Tr(\Delta+z^2)^{-2}$. Note that $\Tr(\Delta_\lambda+z^2)^{-2}=O(z^{-3})$, 
whereas $\Tr(\Delta+z^2)^{-2}=O(z^{-2})$, as $z\to \infty$. 
So the trace expansions clearly do not sum up in an obvious way.

We first note that Proposition \ref{phg-trace} implies
\begin{equation}\label{ML-1}
\Tr(\Delta_\lambda+z^2)^{-2} = (2z)^{-1} \partial_z
\Tr(\Delta_\lambda+z^2)^{-1} \sim \sum_{i=0}^\infty h_i (\lambda,z), \ |(\lambda,z)| \to \infty,
\end{equation}
where $\gamma_i:=(i+3)$ and each $h_i\in C^{\infty}(\R^2_+\setminus \{(0,0)\})$
is homogeneous of order $(-\gamma_i)$ jointly in both variables. 
In particular for fixed $z$ we have $\Tr(\Delta_\gl+z^2)^{-2}=O(\gl^{-3}),
\gl\to\infty$. Hence the sum 
$\sum_{\lambda=1}^{\infty} \Tr(\Delta_\lambda+z^2)^{-2}$
and the integral $\int_1^\infty \Tr(\Delta_\lambda+z^2)^{-2} d\lambda $
converge.

We apply the Euler MacLaurin formula \Eqref{EqRegsum2} to
$f(\gl)=\Tr(\Delta_\gl+z^2)^{-2}$ and find for $M\in \N$ sufficiently large
\begin{align}
\sum_{\lambda=1}^{\infty} \Tr(\Delta_\lambda+z^2)^{-2}
   =&\int_1^\infty \Tr(\Delta_\lambda+z^2)^{-2} d\lambda +
   \frac{1}{2}\Tr(\Delta_1+z^2)^{-2}\nonumber \\
    &    - \sum_{k=1}^M \frac{B_{2k}}{(2k)!} \, 
\pl_\lambda^{(2k-1)}\Tr(\Delta_\lambda+z^2)^{-2}|_{\lambda=1} \label{EM2} \\ 
&+ \frac{1}{(2M+1)!} \int_1^\infty B_{2M+1}(\lambda-[\lambda]) \,
\pl_\lambda^{(2M+1)}\Tr(\Delta_\lambda+z^2)^{-2} d\lambda.\nonumber
\end{align}
We need to establish the asymptotic behaviour of each of the terms above as $z\to \infty$.
The standard resolvent trace expansion, cf. \Eqref{trace-2-expansion}
yields
\begin{equation}\label{exp1}
\begin{split}
\frac{1}{2}\Tr(\Delta_1+z^2)^{-2}  - \sum_{k=1}^M \frac{B_{2k}}{(2k)!} \, 
\partial_\lambda^{(2k-1)}\Tr(\Delta_\lambda+z^2)^{-2}|_{\lambda=1} 
\sim \sum_{i=0}^{\infty} a_i z^{-3-i}, \ z\to \infty.
\end{split}
\end{equation}
Moreover, Proposition \ref{phg-trace} implies
\begin{equation} \label{exp2}
\begin{split}
\frac{1}{(2M+1)!} \int_1^\infty \mid B_{2M+1}(\lambda-[\lambda]) \, 
\partial_\lambda^{(2M+1)}\Tr(\Delta_\lambda+z^2)^{-2} \mid d\lambda\\
\leq C \cdot \int_1^\infty (\lambda+z)^{-4-2M} d\lambda =O(z^{-3-2M}), \ z\to\infty.
\end{split}
\end{equation}
The asymptotic expansion of the first integral term in \Eqref{EM2}
now follows from \Eqref{ML-1} and 
\begin{displaymath}
\int_1^\infty h_i(\lambda, z) \, d\lambda = 
z^{-\gamma_i} \int_1^\infty h_i(\lambda / z, 1) \, d\lambda =
z^{-\gamma_i + 1} \int_1^\infty h_i(\nu, 1) \, d\nu.
\end{displaymath}
The $\nu$-integral is finite, since as a consequence of 
smoothness of $h_i(1,\cdot)$ at $z=0$ and homogeneity, 
$h_i(\nu,1) = O(\nu^{-\gamma_i}), \gamma_i\geq 3$.

The discussion of $\sum\limits_{\gl=-\infty}^0 \Tr(\Delta_\gl+z^2)^{-2}$
is similar and the proof of Theorem \plref{trace-sum} is complete.
\hfill\qed

\section{Proof of the Fubini Theorem \ref{fubini}}

\subsection{Double integrals and regularized limits of homogeneous functions}
\label{SSSDIRL}
\newcommand{\quplane}{\R_+^2\setminus \{(0,0)\}}
As an application of the regularized limit and as a preparation to the proof
of the Fubini Theorem \ref{fubini} we discuss regularized iterated integrals 
of homogeneous functions on the quarter plane $\R_+^2$. 
During this section let $f\in C^\infty(\R^2_+\setminus \{(0,0)\})$
be homogeneous of order $\ga\in\C$. That is for $(x,y)\in \quplane, \gl>0$
we have $f(\gl\cdot x,\gl \cdot y)=\gl\cdot f(x,y)$. Taylor expansion about
$(1,0), (0,1)$ yields the expansions
\begin{align}
    f(x,y) & = y^\ga f(x/y,1) \sim \sum_{j=0}^\infty c_j \, y^{\ga -j } x^j,
    \quad y\to\infty, x\le x_0\label{EqTaylor1}\\
    f(x,y) & = x^\ga f(1,y/x) \sim \sum_{j=0}^\infty d_j \, x^{\ga -j } y^j,
   \quad x\to\infty, y\le y_0\label{EqTaylor2}.
\end{align}
If $\ga+1 \not\in\Z_+$ we put $c_{\ga+1}=d_{\ga+1}=0$ such that these
coefficients are always defined. They will play a crucial role.

\begin{lemma}\label{LLimits} Let $f\in C^\infty(\R^2_+\setminus \{(0,0)\})$
be homogeneous of order $\ga\in\C$.
\par\upshape{1. } Let $\ga+2\not=0, b>0$. Then
\begin{align*}
 \LIM_{z\to\infty} z^{\ga+2} \regint_{b/z}^\infty f(1,y) dy &=
 \frac{-d_{\ga+1}}{\ga+2} b^{\ga+2},\\
 \LIM_{z\to 0} z^{\ga+2} \regint_{b/z}^\infty f(1,y) dy &=0
\end{align*}

\par\upshape{2. } For $\ga+2=0, b>0$ the $\LIM$ of
     $ \log z\, \regint_{b/z}^\infty f(1,y) dy$
as $z\to 0$ and as $z\to \infty$ vanishes.
\end{lemma}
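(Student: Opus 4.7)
\medskip

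The plan is to substitute $T = b/z$, so that $z \to \infty$ corresponds to $T \to 0$ and $z \to 0$ to $T \to \infty$, and to expand $\regint_T^\infty f(1,y)\,dy$ using the Taylor coefficients $\{d_j\}$ at $y = 0$ (from \Eqref{EqTaylor2} at $x=1$, which is legitimate since $(1,0) \in \R_+^2 \setminus \{(0,0)\}$ and $f$ is smooth there) as $T \to 0$, and using the homogeneous expansion $\{c_j\}$ at $y = \infty$ (from \Eqref{EqTaylor1}) as $T \to \infty$. The conclusion in each case is obtained by reading off the coefficient of $z^0 \log^0 z$.

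For the $z \to \infty$ case in Part~1, since $f(1,\cdot)$ is smooth at $0$ and $A := \regint_0^\infty f(1,y)\,dy$ exists, I would write $\regint_{b/z}^\infty f(1,y)\,dy = A - \int_0^{b/z} f(1,y)\,dy$, Taylor-expand the ordinary integral as $\sum_{j=0}^{N-1} \frac{d_j}{j+1}(b/z)^{j+1} + O(z^{-N-1})$, and multiply by $z^{\alpha+2}$ to obtain
\[
z^{\alpha+2}\regint_{b/z}^\infty f(1,y)\,dy = A\,z^{\alpha+2} - \sum_{j=0}^{N-1} \frac{d_j\, b^{j+1}}{j+1}\, z^{\alpha+1-j} + O(z^{\alpha+1-N}).
\]
Because $\alpha+2 \neq 0$, the term $A\,z^{\alpha+2}$ contributes nothing to $\LIM_{z \to \infty}$; the remaining sum yields $-d_{\alpha+1} b^{\alpha+2}/(\alpha+2)$ precisely when $\alpha + 1 \in \N_0$, and the convention $d_{\alpha+1} = 0$ otherwise makes the stated formula universal.

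For the $z \to 0$ case in Part~1, the expansion $f(1,y) \sim \sum_j c_j y^{\alpha-j}$ as $y \to \infty$ combined with \Eqref{EqExRegInt1} yields an expansion of $\regint_T^\infty f(1,y)\,dy$ as a sum of $T^{\alpha-j+1}$ contributions (together with a single $-c_{\alpha+1} \log T$ term when $\alpha+1 \in \N_0$). Multiplying by $z^{\alpha+2} = b^{\alpha+2}\, T^{-\alpha-2}$ produces terms $T^{-1-j}$ with $j \geq 0$ and, possibly, $T^{-\alpha-2}\log T$; since $\alpha+2 \neq 0$, none of these has the form $T^0 \log^0 T$, so $\LIM_{z \to 0}$ vanishes.

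For Part~2 ($\alpha = -2$) both expansions above are multiplied by $\log z$ in place of $z^{\alpha+2}$. At $z \to \infty$ this produces only $A \log z$ and $z^{-j-1} \log z$ contributions, none of which match $z^0 \log^0 z$. At $z \to 0$ the exponent $\alpha-j+1 = -1-j$ is never zero for $j \geq 0$, so the expansion of $\regint_T^\infty f(1,y)\,dy$ carries no $\log T$ term; multiplying by $\log z = \log b - \log T$ then yields only $T^{-1-j}$ and $T^{-1-j}\log T$ contributions, again not matching $T^0 \log^0 T$. The real work throughout is simply bookkeeping of which exponents can produce $z^0 \log^0 z$; the only nontrivial match occurs in Part~1 for $z \to \infty$.
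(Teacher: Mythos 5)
Your proof is correct and takes essentially the same route as the paper's: expand $\regint_T^\infty f(1,y)\,dy$ as $T\to 0$ via the $d_j$-Taylor coefficients and as $T\to\infty$ via the $c_j$-homogeneity coefficients, then read off the $z^0\log^0 z$ coefficient after multiplying by $z^{\alpha+2}$ (or $\log z$). You merely spell out the exponent bookkeeping that the paper compresses into "the Lemma follows immediately."
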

\begin{proof}
 \Eqref{EqTaylor1} implies that 
\[
\regint_z^\infty f(1,y) dy \sim_{z\to\infty} \sum_{j=0, j\not=\ga+1}^\infty
   \frac{-c_j}{\ga-j+1} z^{\ga-j+1} - c_{\ga+1} \log z
\]
and \Eqref{EqTaylor2} implies that
\[
\regint_z^\infty f(1,y) dy \sim_{z\to 0} \regint_0^\infty f(1,y) dy - \sum_{j=0}^\infty
   \frac{d_j}{j+1} z^{j+1}.
\]
From these two asymptotic expansions the Lemma follows immediately.
\end{proof}

\begin{lemma}\label{LHomDoubleInt} Let $f\in C^\infty(\R^2_+\setminus \{(0,0)\})$
be homogeneous of order $\ga\in\C$. 
 \par\upshape{1. } If $\ga+2\not=0, a,b\ge 0, a+b>0$ then we have
\begin{equation}
 \begin{split}\label{EqDoubleInt1}
 \regint_a^\infty &\regint_b^\infty f(x,y) dy dx  = 
 \regint_b^\infty \regint_a^\infty f(x,y) dx dy   \\
 =& -\frac {a^{\ga+2}}{\ga+2} \regint_{b/a}^\infty f(1,y) dy - 
     \frac {b^{\ga+2}}{\ga+2} \regint_{a/b}^\infty f(x,1) dy \\
  &  -c_{\ga+1} \regint_a^\infty x^{\ga+1} \log x dx
     -d_{\ga+1} \regint_b^\infty x^{\ga+1} \log x dx.
   \end{split}
  \end{equation}   
  \par\upshape{2. } If $\ga=-2$ then we have
\begin{equation}
 \begin{split}\label{EqDoubleInt2}
 \regint_a^\infty \regint_b^\infty f(x,y) dy dx  
 =& -\log a \cdot \regint_{b/a}^\infty f(1,y) dy - 
    \log b \cdot  \regint_{a/b}^\infty f(x,1) dy \\
    &  -\regint_{a/b}^\infty f(x,1) \log x dx. 
   \end{split}
 \end{equation}   
If $a=0$ or $b=0$ then in the right hand side one has to take the
appropriate regularized limit. More concretely, one has e.g.
\begin{align}
 &\LIM_{a\to 0} a^{\ga+2} \regint_{b/a}^\infty f(1,y) dy =0, & \ga+2\not
 =0,\label{EqLimits1}\\
 &\LIM_{a\to 0} \log a \cdot \regint_{b/a}^\infty f(1,y) dy =0, & \ga=-2.
 \label{EqLimits2}
\end{align}

If $a=b=0$ then regardless of the value of $\ga$ the iterated
integral $\regint_0^\infty\regint_0^\infty f $ vanishes for
both orders of integration.
\end{lemma}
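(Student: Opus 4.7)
My plan is to reduce the iterated regularized integral to single-variable regularized integrals by a homogeneity-based substitution in the inner integral, followed by integration by parts in the outer one, while carefully tracking the logarithmic corrections that arise from regularized changes of variables. The main technical care lies in this bookkeeping: those log corrections are precisely what produce the $c_{\ga+1}$ and $d_{\ga+1}$ terms in \eqref{EqDoubleInt1} and \eqref{EqDoubleInt2}, and also what delivers Fubini symmetry in Part 1.

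First I would establish an inner integral reduction. For fixed $x>0$, the substitution $y = xs$ in the ordinary integral $\int_b^R f(x,y)\,dy$ together with homogeneity gives $\int_b^R f(x,y)\,dy = x^{\ga+1}\int_{b/x}^{R/x} f(1,s)\,ds$. Using the asymptotic expansion $f(1,s)\sim\sum_{j\ge 0} c_j s^{\ga-j}$ from \eqref{EqTaylor1}, the right-hand side as $R\to\infty$ contains the term $c_{\ga+1} x^{\ga+1}\log(R/x) = c_{\ga+1} x^{\ga+1}(\log R - \log x)$; applying $\LIM_{R\to\infty}$ annihilates $\log R$ but preserves $-\log x$, yielding
\[
\regint_b^\infty f(x,y)\,dy = x^{\ga+1}\regint_{b/x}^\infty f(1,s)\,ds - c_{\ga+1}\, x^{\ga+1}\log x.
\]
Symmetrically, $\regint_a^\infty f(x,y)\,dx = y^{\ga+1}\regint_{a/y}^\infty f(s,1)\,ds - d_{\ga+1}\,y^{\ga+1}\log y$.

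For Part 1 ($\ga+2\ne 0$), I would compute the outer integral of $x^{\ga+1} h(x)$ with $h(x) := \regint_{b/x}^\infty f(1,s)\,ds$ by integration by parts with antiderivative $x^{\ga+2}/(\ga+2)$. Homogeneity yields $h'(x) = (b/x^2) f(1,b/x) = b\, x^{-\ga-2} f(x,b)$, which collapses the remaining integral to $\tfrac{b}{\ga+2}\int_a^R f(x,b)\,dx$. The boundary term at $R$ is controlled by Lemma \ref{LLimits}.1, giving $\LIM_{R\to\infty} R^{\ga+2} h(R) = -d_{\ga+1} b^{\ga+2}/(\ga+2)$. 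Unpacking $\regint_a^\infty f(x,b)\,dx$ via the companion change of variables $x=bs$ (with its own $d_{\ga+1}\log b$ correction) and recognizing via \eqref{EqExRegInt2} that $\tfrac{b^{\ga+2}}{(\ga+2)^2} - \tfrac{b^{\ga+2}\log b}{\ga+2} = \regint_b^\infty y^{\ga+1}\log y\,dy$, all pieces reassemble into \eqref{EqDoubleInt1}. Running the identical argument in the opposite order with $(a,c_{\ga+1})$ and $(b,d_{\ga+1})$ interchanged reproduces the same final expression, establishing Fubini.

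For Part 2 ($\ga = -2$), the convention $c_{-1} = d_{-1} = 0$ removes the log correction in the inner formula. The outer integration by parts now uses antiderivative $\log x$: the boundary term $h(R)\log R$ has vanishing LIM by Lemma \ref{LLimits}.2, and the remaining integral $b\int_a^R \log x\cdot f(x,b)\,dx$ reduces under $x = bs$ to $\log b\cdot\regint_{a/b}^\infty f(s,1)\,ds + \regint_{a/b}^\infty f(s,1)\log s\,ds$, producing \eqref{EqDoubleInt2}. Finally, the boundary cases $a=0$ or $b=0$ follow by taking the regularized limit of the formulas: the vanishing limits \eqref{EqLimits1}, \eqref{EqLimits2} are immediate consequences of Lemma \ref{LLimits} applied via the asymptotics of $\regint_{b/a}^\infty f(1,y)\,dy$ derived from \eqref{EqTaylor1}; the $\regint_a^\infty x^{\ga+1}\log x\,dx$ term vanishes at $a=0$ by \eqref{EqExRegInt2} with $z=0$; iterating to $a=b=0$, the remaining term $-\tfrac{b^{\ga+2}}{\ga+2}\regint_0^\infty f(s,1)\,ds$ has zero $\LIM$ in $b$ since $b^{\ga+2}$ carries no $b^0\log^0$ component when $\ga+2\ne 0$, and analogously in the $\ga=-2$ case, yielding $\regint_0^\infty\regint_0^\infty f = 0$.
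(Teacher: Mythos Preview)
Your proof is correct and follows essentially the same route as the paper's own argument: reduce the inner integral by the homogeneous substitution $y=xs$ (which is exactly an application of Lemma~\ref{LChangeVar}, producing the $c_{\ga+1}x^{\ga+1}\log x$ correction), then integrate by parts in the outer variable using the antiderivative $x^{\ga+2}/(\ga+2)$ (resp.\ $\log x$ when $\ga=-2$), evaluate the boundary contribution at infinity via Lemma~\ref{LLimits}, and unpack the remaining single integral by a second homogeneous substitution $x=bs$, recombining the resulting constants through \eqref{EqExRegInt2}. The only cosmetic difference is that you re-derive the change-of-variables correction directly from the $\log(R/x)$ split rather than citing Lemma~\ref{LChangeVar}, and you make the derivative $h'(x)=b\,x^{-\ga-2}f(x,b)$ explicit; the paper leaves these as implicit steps.
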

\begin{proof}
The existence of the regularized integrals 
$\regint_c^\infty f(x,1)dx, \regint_c^\infty f(1,y)dy$ follows
from the expansions \Eqref{EqTaylor1}, \eqref{EqTaylor2}.
The proof of \Eqref{EqDoubleInt1} and \Eqref{EqDoubleInt2}
is a straightforward exercise in integration. We will mention the steps
where caution due to the regularization process is needed. 

\subsubsection*{$\ga+2\not=0, b>0$} We look
at the integral $\regint_a^\infty\regint_b^\infty f(x,y)dy dx$. In the
inner integral substitute $y\to x\cdot y$ and apply Lemma \ref{LChangeVar}
to obtain
\begin{equation}\label{EqProofFub0}
\regint_a^\infty x^{\ga+1} \regint_{b/x}^\infty f(1,y) dy dx - c_{\ga+1}
 \regint_a^\infty x^{\ga+1} \log x dx,
\end{equation}
where \Eqref{EqTaylor1} was used. In the first summand integrate by parts
to obtain
\begin{equation}\label{EqProofFub1}
\LIM_{R\to\infty} \frac{x^{\ga+2}}{\ga+2} \int_{b/x}^\infty f(1,y) dy
\bigg\vert_{x=a}^{x=R}- \regint_a^\infty \frac{b^{\ga+1}}{\ga+2} f(x/b,1)dx.
\end{equation}
By Lemma \ref{LLimits} the first $\LIM$ as $R\to\infty$ term
equals
\[\begin{cases}
-\frac{a^{\ga+2}}{\ga+2} \regint_{b/a}^\infty f(1,y) dy -
\frac{d_{\ga+1}}{(\ga+2)^2} b^{\ga+2}, & a>0,\\
  -\frac{d_{\ga+1}}{(\ga+2)^2} b^{\ga+2}, & a=0.
 \end{cases}  
\]
In the second integral in \Eqref{EqProofFub1} we use the homogeneity of $f$,
substitute $x\to b\cdot x$ and apply Lemma  \ref{LChangeVar} and
\Eqref{EqTaylor2} to obtain
\[
\frac {b^{\ga+2}}{\ga+2} \regint_{a/b}^\infty f(x,1) dx 
     -\frac{d_{\ga+1}}{\ga+2} b^{\ga+2} \log b.
\]
Taking into account \Eqref{EqExRegInt2} we obtain 1. of the Lemma.

\subsubsection*{$\ga=-2, b>0$} This is essentially the same calculation;
integration by parts in \Eqref{EqProofFub0} now yields
\begin{equation}\label{EqProofFub1a}
\LIM_{R\to\infty} \log x\cdot  \int_{b/x}^\infty f(1,y) dy
\bigg\vert_{x=a}^{x=R}- \regint_a^\infty \log x \cdot b^{-1} f(x/b,1)dx.
\end{equation}
Now one proceeds as before.

\subsubsection*{$b=0$} This case is even simpler. \Eqref{EqProofFub0} now reads
\begin{equation}\label{EqProofFub0a}
\regint_a^\infty x^{\ga+1} \regint_{0}^\infty f(1,y) dy dx - c_{\ga+1}
 \regint_a^\infty x^{\ga+1} \log x dx,
\end{equation}
and the remaining claims follow easily from Lemma \ref{LLimits}.
\end{proof}

Finally, we need the following result about interchanging $\LIM$ and
$\regint$ resp. $\regint$ and differentiation.

\begin{lemma}\label{LLimitExchange}
Let $f:\quplane\to\C$ be smooth and $\ga$--homogeneous. Then we have
for $a\ge 0$ 
\begin{align}
    \LIM_{y\to\infty} \regint_a^\infty f(x,y) dx
              &= \regint_a^\infty \LIM_{y\to\infty} f(x,y) dy + \textup{Corr},
              \label{EqLimitExchange1}\\
    \frac{d}{dy} \regint_a^\infty f(x,y) dx & =           
          \regint_a^\infty \frac{\pl}{\pl y} f(x,y) dx,
              \label{EqLimitExchange2}
\end{align}   
where $\textup{Corr} = \regint_0^\infty f(x,1) dx$ if $\ga=-1$ and zero otherwise.        
\end{lemma}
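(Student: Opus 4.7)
The plan is to reduce both statements to one-variable identities via $\gamma$-homogeneity. Specifically, applying the substitution $x=yu$ to the honest partial integral $\int_a^R f(x,y)\,dx$ and then taking $\LIM_{R\to\infty}$ yields
\[
\regint_a^\infty f(x,y)\,dx = y^{\gamma+1}\, I(a/y), \qquad I(z) := \regint_z^\infty f(u,1)\,du,
\]
and, since $\partial_y f$ is $(\gamma-1)$-homogeneous, analogously
\[
\regint_a^\infty \partial_y f(x,y)\,dx = y^{\gamma}\, J(a/y), \qquad J(z) := \regint_z^\infty (\partial_2 f)(u,1)\,du.
\]
Because $f$ is smooth on $\quplane$, both $f(\cdot,1)$ and $(\partial_2 f)(\cdot,1)$ are smooth at $u=0$, so $I$ and $J$ admit Taylor expansions at $z=0$ and polyhomogeneous expansions at $z=\infty$ inherited from $f(u,1) = u^\gamma f(1,1/u)$.

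For \Eqref{EqLimitExchange1}, expand
\[
y^{\gamma+1} I(a/y) = y^{\gamma+1} I(0) - \sum_{k\ge 0} \frac{a^{k+1}(\partial_1^k f)(0,1)}{(k+1)!}\, y^{\gamma-k}, \quad y\to\infty,
\]
so the $\LIM$ picks out the coefficient of $y^0\log^0 y$: this equals $I(0)=\regint_0^\infty f(u,1)\,du$ exactly when $\gamma=-1$, equals $-\frac{a^{\gamma+1}(\partial_1^\gamma f)(0,1)}{(\gamma+1)!}$ when $\gamma\in\N_0$, and vanishes otherwise. On the other side, Taylor expanding $f(x,y)=y^\gamma f(x/y,1)=\sum_k \frac{x^k(\partial_1^k f)(0,1)}{k!}\,y^{\gamma-k}$ shows $\LIM_{y\to\infty} f(x,y) = \frac{x^\gamma(\partial_1^\gamma f)(0,1)}{\gamma!}$ when $\gamma\in\N_0$ and vanishes otherwise; integrating via \Eqref{EqExRegInt1} produces exactly the matching contribution, while the $\gamma=-1$ case leaves $\regint_0^\infty f(x,1)\,dx$ as the promised correction.

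For \Eqref{EqLimitExchange2}, differentiate the reduced formula using $I'(z)=-f(z,1)$ (from the fundamental theorem of calculus, valid for $z>0$) to obtain
\[
\frac{d}{dy}\bigl[y^{\gamma+1} I(a/y)\bigr] = (\gamma+1)\, y^\gamma I(a/y) + a\, y^{\gamma-1} f(a/y,1),
\]
which must equal $y^\gamma J(a/y)$. Setting $z=a/y$ reduces the claim to the one-variable identity $(\gamma+1) I(z) + z f(z,1) = J(z)$. Differentiating both sides in $z$ and invoking Euler's relation $z\,\partial_1 f(z,1) + \partial_2 f(z,1) = \gamma f(z,1)$ shows the derivatives agree, so the difference is constant in $z$. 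To verify the constant vanishes, I match the expansions as $z\to\infty$ term by term: the coefficient of each $z^{\gamma-k+1}$ in $(\gamma+1) I(z) + z f(z,1) - J(z)$ equals $\frac{(\partial_2^k f)(1,0)}{k!}\bigl[-\tfrac{\gamma+1}{\gamma-k+1}+1+\tfrac{k}{\gamma-k+1}\bigr]=0$.

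The main obstacle is careful bookkeeping at the exceptional values of $\gamma$ where $\gamma-k+1=0$ or $\gamma-k=0$ for some $k\in\N_0$: logarithmic terms appear in the polyhomogeneous expansions of $I$, $J$, and $f$, and one must verify that they cancel in the same algebraic way so that the $\LIM$ in \Eqref{EqLimitExchange1} still selects the pure $y^0$-coefficient and the termwise matching in \Eqref{EqLimitExchange2} still forces the constant to zero. Modulo this bookkeeping, the proof is driven entirely by the single reduction $\regint_a^\infty f(x,y)\,dx = y^{\gamma+1} I(a/y)$ combined with Euler's identity.
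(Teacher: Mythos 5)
Your reduction $\regint_a^\infty f(x,y)\,dx = y^{\gamma+1} I(a/y)$, which you describe as driving the whole argument, is false precisely when $\ga+1\in\N_0$. The substitution $x=yu$ must be fed through the peculiar change-of-variables rule for regularized integrals (Lemma \ref{LChangeVar}): the correct statement is
\[
  \regint_a^\infty f(x,y)\,dx \;=\; y^{\ga+1} I(a/y)\; -\; d_{\ga+1}\,y^{\ga+1}\log y,
\]
where $d_{\ga+1}$ is the coefficient of $x^{-1}$ in the expansion \Eqref{EqTaylor2} of $f(x,1)$ as $x\to\infty$. For \Eqref{EqLimitExchange1} this omission is harmless, since the missing term is $y^{\ga+1}\log y$ and can never contribute to the $y^0\log^0 y$ coefficient. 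So your verification of the first formula is in fact complete.

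For \Eqref{EqLimitExchange2} the gap is real. Differentiating the corrected reduction produces an extra contribution $-d_{\ga+1}\bigl[(\ga+1)y^\ga\log y + y^\ga\bigr]$ that you drop. At the same time, your one-variable identity $(\ga+1)I(z)+zf(z,1)=J(z)$ is also off by exactly this amount: the ``constant of integration'' is not zero. Your termwise check
$d_k\bigl[-\tfrac{\ga+1}{\ga-k+1}+1+\tfrac{k}{\ga-k+1}\bigr]=0$
is valid only for $k\neq\ga+1$; the excluded index $k=\ga+1$ (which you defer to ``bookkeeping'') contributes the constant $d_{\ga+1}$, via the $z^0$ term in $zf(z,1)$, so the true identity is
\[
  (\ga+1)I(z) + z\,f(z,1) - J(z) = d_{\ga+1}.
\]
Your assertion that ``the termwise matching still forces the constant to zero'' is therefore wrong. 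Combined with the uncorrected reduction these two errors cancel, so the final statement of \Eqref{EqLimitExchange2} is still true, but your proof does not establish it as written: one must either invoke Lemma \ref{LChangeVar} and carry the $-d_{\ga+1}y^{\ga+1}\log y$ term through the differentiation, or observe explicitly that the constant $d_{\ga+1}$ appearing on both sides is the same. The paper's proof does exactly the former, substituting $x\to y\cdot x$ via Lemma \ref{LChangeVar} and retaining the $\log y$ correction throughout; otherwise your strategy (reduce to a one-variable function of $a/y$, use Euler's relation for the derivative) is the same as the paper's.
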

\begin{proof}
 From \Eqref{EqTaylor1} we infer
\[
\regint_a^\infty \LIM_{y\to\infty} f(x,y) dx = c_\ga \regint_a^\infty x^\ga dx
= -\frac{c_\ga}{\ga+1}a^{\ga+1},
\]
with the understanding that the right hand side is $0$ for $\ga=-1$ since $c_{-1}=0$.
This expression is set to zero if $a=0$.
On the other hand substituting $x\to y\cdot x$ we find using Lemma
\plref{LChangeVar}
\[
\regint_a^\infty f(x,y) dx = \regint_{a/y}^\infty y^{\ga+1} f(x,1) dx -
d_{\ga+1} y^{\ga+1} \log y
\]
and taking the regularized limit as $y\to \infty$ yields by Lemma \ref{LLimits} again 
$-\frac{c_\ga}{\ga+1}a^{\ga+1}$ if $\ga\neq -1$, with the understanding that this expression 
is set to zero of $a=0$. In case $\ga=-1$ Lemma \ref{LLimits} does not provide any statement, however
the regularized limit is straightforward and equals $\regint_0^\infty f(x,1)dx$. The first formula is proved.

Similarly, writing 
\[
f(x,y) \sim_{x\to\infty} \sum_{j=0}^M d_j x^{\ga -j} y^j + R_M(x,y)
\]
with $R_M$ and its $y$-derivative being  $O(x^{\ga - M})$ locally
uniformly in $y$ we conclude that for $R_M$ 
differentiation by $y$ and integration integration by $x$ can be interchanged.
Furthermore, for each summand we have
\[
 \frac{d}{dy} \regint_a^\infty x^{\ga-j} y^j dx 
      = \regint_a^\infty x^{\ga-j} \frac{d}{dy} y^j dx
      = \regint_a^\infty x^{\ga-j} dx \cdot j y^{j-1}
\]
and the proof is complete.
\end{proof}

\subsection{Proof of the Fubini Theorem \ref{fubini}} \label{SSFubini}

After the preparations done in the previous Subsection the proof of 
the Fubini Theorem \ref{fubini} is now straightforward.

Choosing $N$ large enough it suffices to prove
the Fubini Theorem \ref{fubini} for a homogeneous function
$f_{\A}\in C^{\infty}(\R_+^2 \setminus \{(0,0)\})$, where $\A$ denotes the
degree of homogeneity. For homogeneous functions \Eqref{fubini-eq} is
a direct consequence of Lemma \ref{LHomDoubleInt}.
 
To see \Eqref{EqFubiniSum} for homogeneous $f$ 
we first employ the Euler MacLaurin formula
\Eqref{EqRegsum2} and find for $M$ large enough
\begin{align}
 \regint_a^\infty \regsum_{\lambda=N}^\infty f(x,\lambda)   dx 
    =& \regint_a^\infty \regint_N^\infty f(x,\gl) d\gl dx \nonumber \\
     &+ \sum_{k=1}^M \frac{B_{2k}}{(2k)!} 
    \regint_a^\infty \LIM_{R\to \infty} \pl_2^{2k-1}f(x,R) - \pl_2^{2k-1} f(x,N) dx 
       \label{EqRegsum2a}\\ 
     &+\frac{1}{2} \regint_a^\infty f(x,N) + \LIM_{R\to\infty} f(x,R)dx \\
     & + \frac{1}{(2M+1)!} \int_a^\infty\int_1^\infty 
     B_{2M+1}(\gl-[\gl])\pl_2^{2M+1}f(x,\gl)d\gl dx . \nonumber 
\end{align}
In each term on the right we need to interchange $\regint_a^\infty \ldots dx$
with a limit process. To the first summand we apply \Eqref{fubini-eq}
and obtain the correction term $\int_0^\infty f_{-2}(x,1)\log x\, dx$.
To the second and third summand we apply Lemma \ref{LLimitExchange}
to exchange $\regint_a^\infty$ and $\LIM$ resp. $\pl_2$.
Finally, if $\pl_2^{2M+1}$ is homogeneous of degree $\ga-2M-1$
such that for $M$ large enough the double integral in the last
summand converges in the Lebesgue sense and therefore the order
of integration may be exchanged. \Eqref{EqFubiniSum} is proved.
\hfill \qed

\subsection{Proof of Theorem \ref{main-thm}}
We choose $N$ large enough such that $\Delta_\gl$ is invertible for 
$|\gl|\ge N$. Then
\begin{equation}
\label{ABC}
\begin{split}
\log \det\nolimits_{\zeta} \Delta 
=& -2 \regint_0^\infty z^3 \Tr(\Delta + z^2)^{-2} dz \\
=& - 4 \regint_0^\infty z^3 \sum_{\lambda=1}^\infty \Tr(\Delta_\lambda + z^2)^{-2} dz 
- 2  \regint_0^\infty z^3 \Tr(\Delta_0 + z^2)^{-2} dz \\
=& \log \det\nolimits_{\zeta} \Delta_0 - 
   4 \sum_{\gl=1}^{N-1} \regint_1^\infty z^3 \Tr(\Delta_\lambda + z^2)^{-2} dz
   \\
  & - 4 \regint_0^\infty z^3 \sum_{\lambda=N}^\infty \Tr(\Delta_\lambda +
  z^2)^{-2} dz.
\end{split}
\end{equation}
By Proposition \plref{phg-trace} we may apply the
Fubini Theorem \Eqref{EqFubiniSum} to the last sum. Interchanging
sum and integration yields the correction terms as in Theorem \ref{fubini} and the Theorem is proved. 
\hfill\qed

\section*{Acknowledgements}
The authors gratefully acknowledge helpful discussions with Leonid Friedlander and Rafe Mazzeo. 
They also thank Benedikt Sauer for a careful reading of the manuscript.
The authors would like to thank the anonymous referee for valuable comments and suggestions.
Both authors were supported by the Hausdorff Center for Mathematics.

\bibliography{mlbib,localbib}
\bibliographystyle{amsalpha-lmp}
\listoffigures

\end{document}